\documentclass[12pt,a4paper,leqno,draft]{article}

\usepackage{amsmath, amsthm, amscd, amsfonts, amssymb, graphicx, color,makeidx}
\usepackage[cp1250]{inputenc}
\usepackage[T1]{fontenc} 

\usepackage{ifthen}
\usepackage{xstring}
\usepackage{authblk}
\usepackage{amsthm}
\usepackage{blindtext}
\usepackage{mathtools}

\setlength\textheight{8.39in}
\setlength\textwidth{6.0in}
\setlength\oddsidemargin{0.3in}
\setlength\evensidemargin{0.3in}
\setlength\topmargin{0.0in}
\setlength\headsep{0.35in}
\setlength\headheight{0.17in}
\setlength\topmargin{0.38in}
\setlength\topskip{0.14in}
\setlength\footskip{0.42in}


\newtheorem{theorem}{Theorem}[section]
\newtheorem{proposition}{Proposition}[section]
\newtheorem{lemma}{Lemma}[section]
\newtheorem{corollary}{Corollary}[section]
\theoremstyle{definition}
\newtheorem{definition}{Definition}[section]
\theoremstyle{remark}
\newtheorem{remark}{Remark}[section]
\newtheorem{example}{Example}[section]

\begin{document}

\title{On Drazin invertible $C^{*}$-operators and generalized $C^{*}$-Weyl operators}

\author{Stefan  Ivkovi\'{c}, stefan.iv10@outlook.com}
\affil{Mathematical Institute of the Serbian Academy of Sciences and Arts,  Kneza Mihaila 36,  11000 Beograd,Serbia}
\date{}

\maketitle

\abstract{Generalized Weyl operators on Hilbert spaces have been introduced and studied by Djordjevi\'{c} in \cite{DDj2}. In this paper, we provide a generalization of his result in the setting of $C^{*}$-operators on Hilbert $C^{*}$-modules by giving sufficient conditions under which the sum of a generalized $C^{*}$-Weyl operator and a finitely generated $C^{*}$-operator is a generalized $C^{*}$-Weyl operator. Also, we obtain an extension of Djordjevi\'{c}'s results from the case of operators on Hilbert spaces to the case of operators on Banach spaces. Next, we consider semi-$C^{*}$-$B$-Fredholm operators on Hilbert $C^{*}$-modules and give sufficient conditions under which the composition of two mutually commuting semi-$C^{*}$-$B$-Fredholm operators is a semi-$C^{*}$-$B$-Fredholm operator, thus generalizing the result by Berkani regarding semi-$B$-Fredholm operators on Banach spaces. Finally, we consider Drazin invertible $C^{*}$-operators, and we give necessary and sufficient conditions for two mutually commuting $C^{*}$-operators to be Drazin invertible when their composition is Drazin invertible.}

\vspace{1.5ex}

\textbf{Keywords:} Hilbert $C^{*}$-module, generalized $C^{*}$-Weyl operator, semi-$C^{*}$-$B$-Fredholm operator, Drazin invertible $C^{*}$-operator

\vspace{2.5ex}
\textbf{Acknowledgement}

I am grateful to Professor Dragan S. Djordjevi\'{c} for suggesting me semi-$C^{*}$-Fredholm theory as the topic of my research and for introducing to me the relevant literature. Also, I am grateful to Professor Snezana \v{Z}ivkovi\'{c} Zlatanovi\'{c} for suggesting me to consider Drazin invertible and Browder $C^{*}$-operators. Finally, I am grateful to Professor Vladimir M. Manuilov for the helpful comments regarding the introductory text of the paper. 

This preprint has not undergone peer review (when applicable) or any post-submission improvements or corrections. The Version of Record of this article is published in \cite{IS11}, and is available online at https://doi.org/10.1007/s43034-023-00258-0 .

\newpage

\section{Introduction}

In \cite{DDj2} Djordjevi\'{c} defined generalized Weyl operators on Banach spaces to be the closed range operators for which the kernel and the co-image are isomorphic Banach spaces. He proved then that if the product of two generalized Weyl operators on a Hilbert space has closed image, then this product is also a generalized Weyl operator and he proved that the set of all generalized Weyl operators on a Hilbert space is invariant under finite rank perturbations. However, he remains as an open question whether these statements still hold if we consider operators on general Banach spaces. In this paper we extend these results by Djordjevi\'{c} to the case of operators on Banach spaces.\\

Now, Hilbert $C^{*}$-modules are natural generalization of Hilbert spaces when the field of scalars is replaced by an arbitrary $C^{*}$-algebra. The general theory of Hilbert $C^{*}$-modules was established by Paschke in for instance \cite{P}.

Fredholm theory on Hilbert $C^*$-modules as a generalization of Fredholm theory on Hilbert spaces was started by Mishchenko and Fomenko in \cite{MF}. They have introduced the notion of a Fredholm operator on the standard module over a unital $C^{*}$-algebra and established its main properties.In \cite{IS1} we went further in this direction and defined adjointable semi-$C^{*}$-Fredholm and adjointable semi-$C^{*}$-Weyl operators on Hilbert $C^{*}$-modules. We investigated then and proved several properties of these generalized semi Fredholm and semi-Weyl operators on Hilbert $C^{*}$-modules as an analogue or a generalization of the well-known properties of the classical semi-Fredholm and semi-Weyl operators on Hilbert and Banach spaces. The interest for studying such operators comes from pseudo-differential operators acting on manifolds. The classical theory works nice for compact manifolds, but not for general ones. Even operators on Euclidean spaces are hard to study, e.g. Laplacian is not Fredholm.  However, they sometimes are Fredholm when considered as operators on a compact manifold with coefficients in some $C^{*}$-algebra. Kernels and cokernels of many operators are infinite-dimensional as Banach spaces, but become finitely generated viewed as Hilbert modules. This is the most important reason for studying semi-$C^{*}$-Fredholm operators.

As a part of this research project on semi-$C^{*}$-Fredholm theory, in \cite{IS5} we  define and consider generalized $C^{*}$-Weyl operators, as a generalization (in the setting  of operators on Hilbert $C^{*}$-modules) of generalized Weyl operators defined by Djordjevi\'{c}. We prove in \cite{IS5} for instance an analogue in the setting of generalized $C^{*}$-Weyl operators of the first of the two main theorems by Djordjevi\'{c} in \cite{DDj2} which states that a product of two generalized Weyl operators is also a generalized  Weyl operator in the case when this product has closed range.

In this paper we give a generalization in the setting of generalized $C^{*}$-Weyl operators of the second of  the two main theorems by Djordjevi\'{c} in \cite{DDj2}, the theorem which states that the set of all generalized Weyl operators on a Hilbert space is invariant under finite rank perturbations. \\

Semi-$B$-Fredholm operators have been defined and investigated by Berkani in for instance \cite{BS}, \cite{BM}. The notion of semi-$C^{*}$-$B$-Fredholm operators on Hilbert $C^{*}$-modules as a generalization of semi-$B$-Fredholm operators was introduced in \cite{IS5}. Now, in this paper we present an extension in the setting of semi-$C^{*}$-$B$-Fredholm operators of the well known result by Berkani given in \cite{BM} which states that if two $B$-Fredholm operators mutually commute, then their composition is also $B$-Fredholm and the index is additive.

In several results in this paper we assume that the image of an operator is closed, which shows that closed range operators are important in semi-Fredholm theory on Hilbert $ C^{*}$-modules. This naturally leads  to study closed range $C^{*}$-operators. For two arbitrary bounded, $C^{*}$-operators with closed images, we give  necessary and sufficient conditions under which their composition has closed image. This is a generalization (in the setting  of operators on Hilbert $C^{*}$-modules) of the well-known Bouldin's result in \cite{Bld} for operators on Hilbert spaces. Moreover, we give examples of $C^{*}$-Fredholm operators with non-closed image. Also, we give an example of a $C^{*}$-Fredholm operator $F$ such that $Im F$ is closed, but $Im F^{2} $ is not closed. This illustrates how differently $C^{*}$-Fredholm operators may behave from the classical Fredholm operators on Hilbert and Banach spaces, that always have closed image.\\

Recall that a bounded, linear operator on a Banach space is called Drazin invertible if it has finite ascent and descent whereas it is called Browder if it is both Fredholm and Drazin invertible. In the last section of the paper, we consider Drazin invertible $C^{*}$-operators as a generalization of Drazin invertible operators on Hilbert spaces. We give necessary and sufficient conditions for two mutually commuting $C^{*}$-operators to be Drazin invertible when their composition is Drazin invertible. Also, we give examples of two mutually commuting $C^{*}$-operators whose composition is Drazin invertible whereas they are not Drazin invertible. Finally, we introduce a concept of $C^{*}$-Browder operator as a generalization of  Browder operator on a Hilbert space and study the relationship between Drazin invertible $C^{*}$- operators and $C^{*}$-Browder operators.

Section  \ref{S02} and Section \ref{S03} contain the unpublished results from the PhD thesis by the author, see \cite{IS9}.

\section{Preliminaries}\label{S01}

In this paper we let $\mathcal{A} $ denote a unital $C^{*}$-algebra and $H_{\mathcal{A} }$ denote the standard Hilbert module over $ \mathcal{A}.$ For a Hilbert $\mathcal{A}$-module  $M$ we let $B^{a}(M) $ denote the $C^{*}$-algebra of all $\mathcal{A}$-linear, bounded, adjointable operators on $M.$\\
Moreover, for general Banach spaces $X$ and $Y$, we let $B(X, Y)$ denote the Banach algebra of all bounded, linear operators from $X$ into $Y$ and we simply put $B(X,X):=B(X).$ 	

By the symbol $\tilde{ \oplus} $ \index{$\tilde{ \oplus} $} we denote the direct sum of modules as given in \cite{MT}.

Thus, if $M$ is a Hilbert $C^{*}$-module and $M_{1}, M_{2}$ are two closed submodules of $M,$ we write $M=M_{1} \tilde \oplus M_{2}$ if $M_{1} \cap M_{2}=\lbrace 0 \rbrace$ and $M_{1}+ M_{2}=M.$ If, in addition $M_{1}$  and $M_{2}$ are mutually orthogonal, then we write $M=M_{1} \oplus M_{2}.$

\begin{definition} \label{D D05}    
	\cite[Definition 2.1]{IS1}, \cite{MF} Let $F \in B^{a}(H_{\mathcal{A}}).$ We say that $F $ is an upper semi-{$\mathcal{A}$}-Fredholm operator if there exists a decomposition $$H_{\mathcal{A}} = M_{1} \tilde \oplus {N_{1}} \stackrel{F}{\longrightarrow} M_{2} \tilde \oplus N_{2}= H_{\mathcal{A}} $$ with respect to which $F$ has the matrix\\
	
	\begin{center}
		$\left\lbrack
		\begin{array}{cc}
		F_{1} & 0 \\
		0 & F_{4} \\
		\end{array}
		\right \rbrack,
		$
	\end{center}
	where $F_{1}$ is an isomorphism, $M_{1},M_{2},N_{1},N_{2}$ are closed submodules of $H_{\mathcal{A}} $ and $N_{1}$ is finitely generated. Similarly, we say that $F$ is a lower semi-{$\mathcal{A}$}-Fredholm operator if all the above conditions hold except that in this case we assume that $N_{2}$ ( and not $N_{1}$ ) is finitely generated. If both $N_{1} $ and $N_{2} $ are finitely generated, then $F$ is $\mathcal{A}$-Fredholm operator.
\end{definition}
Set
\begin{center}
	$\mathcal{M}\Phi_{+}(H_{\mathcal{A}})=\lbrace F \in B^{a}(H_{\mathcal{A}}) \mid F $ is upper semi-{$\mathcal{A}$}-Fredholm $\rbrace ,$	\index{$\mathcal{M}\Phi_{+}(H_{\mathcal{A}})$}
\end{center}
\begin{center}
	$\mathcal{M}\Phi_{-}(H_{\mathcal{A}})=\lbrace F \in B^{a}(H_{\mathcal{A}}) \mid F $ is lower semi-{$\mathcal{A}$}-Fredholm $\rbrace ,$	\index{$\mathcal{M}\Phi_{-}(H_{\mathcal{A}})$}
\end{center}
\begin{center}
	$\mathcal{M}\Phi(H_{\mathcal{A}})=\lbrace F \in B^{a}(H_{\mathcal{A}}) \mid F $ is $\mathcal{A}$-Fredholm operator on $H_{\mathcal{A}}\rbrace .$ \index{$\mathcal{M}\Phi(H_{\mathcal{A}})$}
\end{center} 

Next we set $\mathcal{M}\Phi_{\pm}(H_{\mathcal{A}})=\mathcal{M}\Phi_{+}(H_{\mathcal{A}}) \cup \mathcal{M}\Phi_{-}(H_{\mathcal{A}}) .$ \index{$\mathcal{M}\Phi_{\pm}(H_{\mathcal{A}})$}
Notice that if $M,N$ are two arbitrary Hilbert modules $C^{*}$-modules, the definition above could be generalized to the classes $\mathcal{M}\Phi_{+}(M,N)$ and $\mathcal{M}\Phi_{-}(M,N)$.

\begin{definition} \label{DP D06}  
	\cite{KM} \cite[Definition 2.7.1]{MT} Let $M $ be an abelian monoid. Consider the Cartesian product $M \times M $ and its quotient monoid with respect to the equivalence relation
	$$(m,n) \sim (m^{\prime}, n^{\prime}) \Leftrightarrow \exists p,q:(m,n)+(p,p)=(m^{\prime}, n^{\prime})+(q,q).$$
	This quotient monoid is a group, which is denoted by $S(M)$ and is called the symmetrization of $M.$ Consider now the additive category $\mathcal{P}(\mathcal{A}) $ of projective modules over a unital $C^{*}$-algebra $\mathcal{A}$ and denoted by $[\mathcal{M}] $ the isomorphism class of an object $\mathcal{M} $ from $\mathcal{P}(\mathcal{A}) .$ The set $\phi(\mathcal{P}(\mathcal{A})) $ of these classes has the structure of an Abelian monoid with respect to the operation $[\mathcal{M}]+[\mathcal{N}]=[\mathcal{M}  \oplus \mathcal{N}] .$ In this case the group $S(\phi(\mathcal{P}(\mathcal{A}))) $ is denoted by $K(\mathcal{A}) $ or $K_{0}(\mathcal{A}) $ \index{$K_{0}(\mathcal{A}) $} and is called the $K$-group of  $\mathcal{A}$ or the Grothendieck group of the category $\mathcal{P}(\mathcal{A}) .$
\end{definition} 

As regards the $K$-group $K_{0} (\mathcal{A})$, it is worth mentioning that it is not true in general that $[M]=[N]$ implies that $ M \cong N    $ for two finitely generated Hilbert modules $M, N$ over $\mathcal{A}$. If $K_{0} (\mathcal{A})$ satisfies the property that $[N]=[M]$ implies that $ N \cong M     $ for any two finitely generated, Hilbert modules $M, N$ over $\mathcal{A}$, then $K_{0} (\mathcal{A})   $ is said to satisfy "the cancellation property", see \cite[Section 6.2] {W}.

\begin{definition} \label{DP D08} 
	\cite[Definition 2.7.8]{MT}
	Let $F \in \mathcal{M}{\Phi} (H_{\mathcal{A}}).$ We define the index of $F$ by 
	$$ \text {\rm index }  F=[\mathcal{N}_{1}]-[\mathcal{N}_{2}] \in K_{0}(\mathcal{A}).$$ \index{$\text {\rm index }  F$}
\end{definition}

\begin{theorem}  \label{DP T07}    
	\cite[Theorem 2.7.9]{MT} The index is well defined.
\end{theorem}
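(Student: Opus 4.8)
The statement asserts that the class $\operatorname{index} F=[\mathcal{N}_{1}]-[\mathcal{N}_{2}]\in K_{0}(\mathcal{A})$ does not depend on the chosen decomposition of the $\mathcal{A}$-Fredholm operator $F$. So I would start from two decompositions,
\[
H_{\mathcal A}=M_1\tilde\oplus N_1\xrightarrow{F}M_2\tilde\oplus N_2=H_{\mathcal A},\qquad
H_{\mathcal A}=M_1'\tilde\oplus N_1'\xrightarrow{F}M_2'\tilde\oplus N_2'=H_{\mathcal A},
\]
each block-diagonal with $(1,1)$-block an isomorphism ($F_1$, resp.\ $F_1'$) and with finitely generated $N_i,N_i'$. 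The goal is the identity $[N_1]-[N_2]=[N_1']-[N_2']$ in $K_0(\mathcal A)$; by the definition of the Grothendieck group $S(\phi(\mathcal P(\mathcal A)))$ this amounts to producing a finitely generated projective $P$ with $N_1\oplus N_2'\oplus P\cong N_2\oplus N_1'\oplus P$. I would stress at the outset that, since $K_0(\mathcal A)$ need not satisfy the cancellation property, one should expect only a \emph{stable} isomorphism and not an isomorphism of the individual defect modules; the whole argument must therefore be carried out at the level of $K_0(\mathcal A)$-classes.

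The first and cleanest step is an invariance lemma under enlargement. Suppose $L\subseteq M_1$ is a finitely generated, complemented submodule, $M_1=M_1^{\circ}\tilde\oplus L$. Since $F_1\colon M_1\to M_2$ is an isomorphism, $F(L)$ is complemented in $M_2$ and $F|_L\colon L\to F(L)$ is an isomorphism, so $F$ is block-diagonal also with respect to
\[
H_{\mathcal A}=M_1^{\circ}\tilde\oplus\bigl(L\tilde\oplus N_1\bigr)\xrightarrow{F}F(M_1^{\circ})\tilde\oplus\bigl(F(L)\tilde\oplus N_2\bigr)=H_{\mathcal A},
\]
whose isomorphism corner is $F_1|_{M_1^{\circ}}$ and whose defect modules are $L\tilde\oplus N_1$ and $F(L)\tilde\oplus N_2$. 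The index read off from this enlarged decomposition is $[L]+[N_1]-[F(L)]-[N_2]=[N_1]-[N_2]$, because $[F(L)]=[L]$. Thus moving a finitely generated complemented piece of $M_1$ into the defect part leaves the class unchanged; this is the mechanism by which the index becomes a well-defined $K_0$-invariant.

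It then remains to connect the two given decompositions. The plan is to exhibit them both as enlargements of a single common decomposition, so that the invariance lemma applies twice and yields the desired equality. Equivalently and more concretely, writing the identity of $H_{\mathcal A}$ in block form with respect to the two domain decompositions gives an invertible adjointable $S=\left[\begin{smallmatrix}a&b\\c&d\end{smallmatrix}\right]\colon M_1\tilde\oplus N_1\to M_1'\tilde\oplus N_1'$, and similarly an invertible $T$ on the codomain side, with $T\,\operatorname{diag}(F_1,F_4)=\operatorname{diag}(F_1',F_4')\,S$. Reading off the corner relations (in particular $F_4'd=d'F_4$, together with the identities coming from $S^{-1}$ and $T^{-1}$), I would assemble an explicit $2\times2$ block operator from $N_1\oplus N_2'$ to $N_2\oplus N_1'$ whose invertibility is forced by that of $S$ and $T$; after adding a suitable finitely generated projective $P$ to absorb the corner defects, this produces the required stable isomorphism.

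The main obstacle is precisely this last construction. In a Hilbert $C^{*}$-module the sum of two closed submodules need not be closed or complemented, so one cannot simply take $N_1+N_1'$ as the common defect part, and the naive common refinement is unavailable. I would circumvent this by exploiting that all the $N_i,N_i'$ are finitely generated and that the relevant projections are adjointable, keeping every object inside the category of finitely generated projective summands of $H_{\mathcal A}$; the residual failure of cancellation in $K_0(\mathcal A)$ is then harmless, being absorbed into the auxiliary summand $P$. Working throughout with $K_0(\mathcal A)$-classes, rather than with isomorphism classes of modules, is what makes the argument go through.
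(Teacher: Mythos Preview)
The paper does not actually prove this theorem: it is stated in the preliminaries as a citation of \cite[Theorem~2.7.9]{MT}, with no proof given. There is therefore no ``paper's own proof'' to compare your proposal against. The closest the paper comes is Lemma~\ref{11r l01}, which proves well-definedness of the index under the additional hypothesis that $\operatorname{Im} F$ is closed, and on an arbitrary Hilbert module $M$; that argument proceeds quite differently from yours, by showing directly that any $\mathcal{M}\Phi$-decomposition yields $[N_1]-[N_2]=[\ker F]-[\operatorname{Im} F^{\perp}]$, so that the index is computed by intrinsic data of $F$ rather than by comparing two arbitrary decompositions.

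As for your proposal itself: the enlargement lemma is correct and is indeed the standard first reduction. The gap is in the final step. You correctly identify the obstacle---one cannot simply form $N_1+N_1'$ as a common defect module---but you do not actually overcome it. The sentence ``I would assemble an explicit $2\times 2$ block operator \dots\ whose invertibility is forced by that of $S$ and $T$'' is precisely where the work lies, and you have not done it; nor have you said which finitely generated projective $P$ is to be added, or why the resulting map is invertible. The actual argument in \cite{MT} does not proceed via such a block construction: it shows that any $\mathcal{M}\Phi$-decomposition can be refined to one in which the defect modules sit inside some $L_n$ (cf.\ \cite[Theorem~2.7.5]{MT}, used in the paper's Lemma~\ref{06l 10}), and then two such standard decompositions are compared. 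If you want to complete your route, you must either carry out that refinement step or produce the explicit stable isomorphism you allude to; as written, the proposal stops at the point where the genuine difficulty begins.
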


\begin{lemma}  \label{D L18}        
	Let $M$ be a Hilbert $C^{*}$-module and $M_{1}, M_{2} $ be closed submodules of $M$ such that $M_{1} \subseteq M_{2} $ and  $M=M_{1} \tilde \oplus M_{1}^{\prime} $ for some Hilbert submodule $ M_{1}^{\prime}.$ Then $M_{2} =M_{1} \tilde \oplus (M_{1}^{\prime} \cap M_{2}).$
\end{lemma}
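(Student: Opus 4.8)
The plan is to recognize this statement as the Dedekind modular law for submodules, adapted to the Hilbert $C^{*}$-module setting, where the only extra point to check is that the summands involved are genuinely closed submodules so that the decomposition $\tilde{\oplus}$ makes sense inside $M_2$. Recall that by the definition of $\tilde{\oplus}$ recorded in Section \ref{S01}, establishing $M_2 = M_1 \tilde{\oplus} (M_1' \cap M_2)$ amounts to proving the two set-theoretic identities $M_1 \cap (M_1' \cap M_2) = \lbrace 0 \rbrace$ and $M_1 + (M_1' \cap M_2) = M_2$, together with the observation that both $M_1$ and $M_1' \cap M_2$ are closed submodules of $M_2$.

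First I would dispose of the trivial intersection. Since $M_1 \cap (M_1' \cap M_2) \subseteq M_1 \cap M_1'$ and the hypothesis $M = M_1 \tilde{\oplus} M_1'$ gives $M_1 \cap M_1' = \lbrace 0 \rbrace$, we immediately get $M_1 \cap (M_1' \cap M_2) = \lbrace 0 \rbrace$. For closedness, $M_1$ is closed by assumption, and $M_1' \cap M_2$ is an intersection of two closed submodules of $M$, hence itself a closed submodule; both are therefore closed submodules of the Hilbert $C^{*}$-module $M_2$.

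Next I would prove the sum identity $M_1 + (M_1' \cap M_2) = M_2$. The inclusion $\subseteq$ is immediate: $M_1 \subseteq M_2$ by hypothesis and $M_1' \cap M_2 \subseteq M_2$, so their sum lies in $M_2$. The reverse inclusion is the only place where the containment $M_1 \subseteq M_2$ is actually used. Given an arbitrary $x \in M_2$, I would use the decomposition $M = M_1 \tilde{\oplus} M_1'$ to write $x = m_1 + m_1'$ with $m_1 \in M_1$ and $m_1' \in M_1'$. Then $m_1' = x - m_1$, and since $x \in M_2$ while $m_1 \in M_1 \subseteq M_2$, it follows that $m_1' \in M_2$, whence $m_1' \in M_1' \cap M_2$. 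Thus $x = m_1 + m_1' \in M_1 + (M_1' \cap M_2)$, completing the inclusion and hence the identity.

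There is essentially no serious obstacle here: the argument is the standard modular-law computation, and the $C^{*}$-module structure enters only through the bookkeeping that $M_1' \cap M_2$ is closed. The one point to keep in mind, and the closest thing to a subtlety, is that $\tilde{\oplus}$ denotes an algebraic (not necessarily orthogonal) direct sum of closed submodules, so I would make sure to verify closedness explicitly rather than appealing to any orthogonal complementation, which need not be available in this non-orthogonal setting.
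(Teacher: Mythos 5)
Your proposal is correct and follows essentially the same route as the paper: the paper's proof is exactly the modular-law computation of writing $z \in M_2$ as $z = x + y$ with $x \in M_1$, $y \in M_1'$ and using $M_1 \subseteq M_2$ to conclude $y \in M_1' \cap M_2$. Your additional remarks on the trivial intersection and closedness of $M_1' \cap M_2$ are routine checks the paper leaves implicit.
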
	

\begin{proof}
	Since $M=M_{1} \tilde \oplus M_{1}^{\prime}	$ by assumption and $M_{2} \subseteq M,$ any $z \in M_{2}$ can be written as $z=x+y$ for some $x \in M_{1}$ and $y \in M_{1}^{\prime}.$ Now, since $M_{1} \subseteq M_{2} $ by assumption, we have $y=z-x \in M_{2}.$  Thus, $y \in M_{1}^{\prime} \cap M_{2}.$
\end{proof}

\begin{remark}
	Lemma \ref{D L18} is a slightly modifed version of \cite[Lemma 2.6]{IS3}.
\end{remark}

\section{Generalized $C^{*}$-Weyl operators}\label{S02}
In this section we consider generalized $C^{*}$-Weyl operators and provide a generalization in this setting of  \cite[Theorem 2]{DDj2} concerning perturbations of generalized Weyl operators by finite rank operators. Moreover, we extend the results in \cite{DDj2} from the case of operators on Hilbert spaces to the case of regular operators on Banach spaces.\\
We start with the following definition.

\begin{definition} \label{06d 13}  
	For two Hilbert $C^{*}$-modules $M$ and $M^{\prime}$ we set ${\tilde{\mathcal{M}}\Phi}_{0}^{gc}  (M,M^{\prime})   $ \index{$\tilde{\mathcal{M}\Phi}_{0}^{gc}  (M,M^{\prime})$} to be the class of all closed range operators $F \in B^{a} (M,M^{\prime}) $ for which there exist finitely generated Hilbert submodules $N, \tilde{N}  $ with the property that $$N \oplus \ker F \cong \tilde{N} \oplus Im F^{\perp}  .$$
\end{definition}

Then we obtain the following generalization of \cite[Theorem 2]{DDj2}.

\begin{lemma} \label{06l 09}   
	Let $T \in  {\tilde{\mathcal{M}}\Phi}_{0}^{gc}  (H_{\mathcal{A}} )   $ and $ F \in  B^{a}(H_{\mathcal{A}} )$  such that  $ Im F$ is closed, finitely generated. Suppose that $Im (T+F), T (\ker F), P (\ker T), P (\ker (T+F))$ are closed, where $P$ denotes the orthogonal projection onto $\ker F^{\perp}  .$ Then 
	$$T+F \in {\tilde{\mathcal{M}}\Phi}_{0}^{gc}  (H_{\mathcal{A}} )  .$$
\end{lemma}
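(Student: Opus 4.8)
\emph{Strategy.} The plan is to show that the kernel and the orthogonal complement of the image of $T+F$ differ from those of $T$ only by finitely generated modules, and then to transport the generalized $C^{*}$-Weyl relation of $T$ across these differences. Throughout put $K=\ker F$ and $K^{\perp}=Im\,F^{*}$; since $Im\,F$ is closed and finitely generated, $F$ has closed range, $H_{\mathcal A}=K\oplus K^{\perp}$, and $K^{\perp}\cong Im\,F$ is finitely generated, while $P$ is the orthogonal projection onto $K^{\perp}$. The reduction rests on two stable-isomorphism claims: (1) $\ker(T+F)\oplus P(\ker T)\cong \ker T\oplus P(\ker(T+F))$, and (2) $Im(T+F)^{\perp}\oplus A\cong Im\,T^{\perp}\oplus B$, where $A,B$ are finitely generated modules identified below. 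Granting (1) and (2), I would combine them with the hypothesis $N\oplus\ker T\cong\tilde N\oplus Im\,T^{\perp}$ by repeated use of associativity of $\oplus$ to produce finitely generated modules $N'=P(\ker T)\oplus N\oplus B$ and $\tilde N'=\tilde N\oplus A\oplus P(\ker(T+F))$ satisfying $N'\oplus\ker(T+F)\cong\tilde N'\oplus Im(T+F)^{\perp}$; together with the closedness of $Im(T+F)$ this is exactly membership in ${\tilde{\mathcal{M}}\Phi}_{0}^{gc}(H_{\mathcal A})$.

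\emph{Kernels.} Since $F$ vanishes on $K$, one checks $\ker(T+F)\cap K=\ker T\cap K=:K_{0}$. The restriction $P|_{\ker T}$ is adjointable (composition of the inclusion of the complemented submodule $\ker T$ with $P$), its kernel is exactly $K_{0}$, and its range is $P(\ker T)$, which is closed by hypothesis; hence $K_{0}$ is orthogonally complemented in $\ker T$ and $\ker T\cong K_{0}\oplus P(\ker T)$. As $P(\ker T)$ is a complemented submodule of the finitely generated module $K^{\perp}$, it is finitely generated. The same argument applied to $T+F$ (using that $P(\ker(T+F))$ is closed) gives $\ker(T+F)\cong K_{0}\oplus P(\ker(T+F))$ with $P(\ker(T+F))$ finitely generated, and adding the two complementary finitely generated summands yields claim (1).

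\emph{Cokernels.} Put $R=T(\ker F)$, which is closed by hypothesis and, being the closed range of $T$ restricted to the complemented submodule $\ker F$, is orthogonally complemented in $H_{\mathcal A}$. Since $R\subseteq Im\,T$ and $R\subseteq Im(T+F)$ (for $x\in\ker F$ one has $Tx=(T+F)x$), Lemma~\ref{D L18} applied to $H_{\mathcal A}=R\oplus R^{\perp}$ gives $Im\,T=R\oplus A$ and $Im(T+F)=R\oplus B$ with $A=R^{\perp}\cap Im\,T$ and $B=R^{\perp}\cap Im(T+F)$. Projecting orthogonally onto $R^{\perp}$ exhibits $A$ and $B$ as surjective images of $K^{\perp}$ (since modulo $R$ the images of $T$ and $T+F$ are generated by their values on $K^{\perp}$), hence finitely generated. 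Taking orthogonal complements of $R$ inside $H_{\mathcal A}$ then gives $R^{\perp}=A\oplus Im\,T^{\perp}=B\oplus Im(T+F)^{\perp}$, which is claim (2).

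\emph{Main obstacle.} The conceptual content is the decomposition-and-reassembly above; the delicate points are all bookkeeping inside the Hilbert $C^{*}$-module category. Every direct-sum splitting must be justified by orthogonal complementability, and this is precisely where each of the four closedness hypotheses is consumed: $P(\ker T)$ and $P(\ker(T+F))$ closed make the kernel splittings work, while $Im(T+F)$ closed and $T(\ker F)$ closed make $R$ and the cokernel splittings work. I expect the main effort to lie in verifying that $P(\ker T)$, $P(\ker(T+F))$, $A$, and $B$ are genuinely finitely generated (via the principle that a complemented, or surjective-continuous-image, submodule of a finitely generated module is finitely generated) and in arranging the bookkeeping so that the argument stays at the level of honest isomorphisms of modules, rather than equalities of classes in $K_{0}(\mathcal A)$, thereby avoiding any appeal to a cancellation property.
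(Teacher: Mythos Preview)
Your proposal is correct and follows essentially the same route as the paper's proof: the paper likewise splits $\ker T$ and $\ker(T+F)$ through the common piece $\ker T\cap\ker F$ via the adjointable maps $P|_{\ker T}$ and $P|_{\ker(T+F)}$, splits $Im\,T$ and $Im(T+F)$ through $T(\ker F)$, shows the complementary pieces are finitely generated as images of $\ker F^{\perp}$, and then chains the resulting isomorphisms exactly as you do. (One harmless slip: your labels $A$ and $B$ are swapped between the statement of claim~(2) and its derivation from $R^{\perp}=A\oplus Im\,T^{\perp}=B\oplus Im(T+F)^{\perp}$, but this is purely cosmetic.)
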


\begin{proof} 
	Since $Im T$ and $Im (T+F)$ are closed by assumption, by \cite[Theorem 2.3.3]{MT} we have $H_{\mathcal{A}} = Im T \oplus Im T^{\perp}  $ and $H_{\mathcal{A}} = Im (T+F) \oplus Im (T+F)^{\perp}.$ Similarly, since $ Im F $ is closed by assumption, from \cite[Theorem 2.3.3]{MT} we get that \linebreak[4] 
	$ H_{\mathcal{A}}=\ker F^{\bot}\oplus\ker F. $ Hence $ T_{\mid_{\ker F}} $ is an adjointable operator from $ \ker F $ into $ Im T $ (and $ (T+F)_{\mid_{\ker F}}=T_{\mid_{\ker F}} $ is an adjointable operator from $ \ker F $ into $ Im (T+F) $ ). Now, since $ T(\ker F) $ is closed by assumption, again by applying  \cite[Theorem 2.3.3]{MT} on the operator $ T_{\mid_{\ker F}}  ,$ we deduce that 
	$$Im T=T (\ker F)\oplus N \text{ and } \text{ }Im (T+F)=T (\ker F) \oplus N^{\prime}$$ for some Hilbert submodules $ N,N^{\prime} .$ Hence $$ Im T^{\perp} \oplus N=Im(T+F)^{\perp} \oplus N^{\prime}=T(\ker F)^{\perp} .$$ 
	Thus, $T(\ker F) $ is orthogonally complementable in $H_{ \mathcal{A}}.$ Let $ Q$ denote the orthogonal projection onto $T(\ker F)^{\perp}.$
	It turns out that $N$ and $N^{\prime} $ are finitely generated. Indeed, we have 
	$$Im T = T (\ker F)+T(\ker F^{\perp}) \text{ and } Im (T + F)=T(\ker F)+(T+F)(\ker F^{\perp}).$$ 
	As $F_{\mid_{\ker F^{\perp}}} $ is an isomorphism onto $Im F$ by the Banach open mapping theorem and $Im F$ is finitely generated by assumption, it follows that $\ker F^{\perp} $ is finitely generated. Hence $QT(\ker F^{\perp}) $ and $Q(T+F)(\ker F^{\perp}) $ are finitely generated. However, we have 
	$$N=Q(Im T)=QT(\ker F^{\perp}) \text{ and } N^{\prime}=Q(Im (T+F))=Q(T+F)(\ker F^{\perp}).$$ Furthermore, since $P (\ker T)$ is closed by assumption and $P_{\mid_{\ker T}}  $ is adjointable (as $\ker T$ is orthogonally complementable by \cite[Theorem 2.3.3]{MT}), then $\ker P_{\mid_{{(\ker T)}}}$ $=\ker F \cap \ker T  $ is orthogonally complementable in $\ker T,$ so 
	$$\ker T=(\ker F \cap \ker T) \oplus M  $$ 
	for some closed submodule $M.$ We have that $P_{\mid_{M}}  $ is an isomorphism onto $P(\ker T).$  Since $P_{\mid_{\ker T}}  $ is adjointable and $P (\ker T)$ is closed, by \cite[Theorem 2.3.3]{MT} $ P (\ker T)$ is orthogonally complementable in $\ker F^{\perp}  .$ As $\ker F^{\perp} $ is finitely generated, it follows that $P (\ker T)$ is finitely generated. Thus, $M$ must be finitely generated because $P_{\mid_{M}}  $ is an isomorphism onto $P(\ker T).$ \\
	By similar arguments as above, using that $ P(\ker (T+F)) $ is closed by assumption, we obtain that 
	$$\ker (T+F)= (\ker(T+F) \cap \ker F ) \oplus M^{\prime} ,$$ 
	where $M^{\prime}$ is a finitely generated Hilbert submodule. Now, 
	$ \ker T \cap \ker F = \ker (T+F) \cap \ker F,$ 
	so we have 
	$$\ker (T+F)=(\ker T \cap \ker F) \oplus M^{\prime}.$$ 
	Finally, since $T \in  {\tilde{\mathcal{M}}\Phi}_{0}^{gc}  (H_{\mathcal{A}} )   ,$ there exist finitely generated Hilbert submodules $R$ and $R^{\prime}$ such that $R \oplus \ker T \cong R^{\prime} \oplus Im T^{\perp}.$ Combining all this together, we deduce that 
	$$\ker (T+F) \oplus M \oplus N \oplus R \cong (\ker T \cap \ker F) \oplus M^{\prime} \oplus M \oplus N \oplus R  $$ $$\cong \ker T \oplus M^{\prime} \oplus N \oplus R \cong Im T^{\perp} \oplus M^{\prime} \oplus N \oplus R^{\prime} \cong Im(T+F)^{\perp} \oplus M^{\prime} \oplus N^{\prime} \oplus R^{\prime} .$$
\end{proof}

Next we recall the definition of generalized $\mathcal{A}$-Weyl operator. 

\begin{definition} \label{D01}   
	\cite[Definition 11]{IS5} Let $F \in B^{a} (H_{\mathcal{A}}) .$	We say that $F$ is generalized $\mathcal{A}$-Weyl, denoted by $F \in  {\mathcal{M}\Phi}_{0}^{gc}(H_{\mathcal{A}})   $ \index{$\mathcal{M}\Phi_{0}^{gc}(H_{\mathcal{A}})$} if $Im F$ is closed and $ker F \cong Im F^{\perp}.$ 
\end{definition}

From Lemma \ref{06l 09} we deduce the following corollary.

\begin{corollary} \label{06c 05}   
	Let $T \in {\mathcal{M}\Phi}_{0}^{gc}  (  H_{\mathcal{A}}   )    $ and suppose that $\ker T \cong Im T^{\perp} \cong H_{\mathcal{A}} .$ If $F \in B^{a} (H_{\mathcal{A}} )  $ satisfies the assumptions of Lemma \ref{06l 09}, then 
	$$ \ker(T+F) \cong Im(T+F)^{\perp} \cong H_{\mathcal{A}} .$$ 
	In particular, $T+F \in {\mathcal{M}\Phi}_{0}^{gc}  (H_{\mathcal{A}} )  .$ 
\end{corollary}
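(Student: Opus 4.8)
The plan is to apply Lemma~\ref{06l 09} to $T$ and then sharpen its conclusion using the two extra hypotheses $\ker T\cong H_{\mathcal A}$ and $Im T^{\perp}\cong H_{\mathcal A}$. First I would record that ${\mathcal{M}\Phi}_{0}^{gc}(H_{\mathcal A})\subseteq{\tilde{\mathcal{M}}\Phi}_{0}^{gc}(H_{\mathcal A})$: if $\ker T\cong Im T^{\perp}$, then the isomorphism required in Definition~\ref{06d 13} holds with the trivial choice $N=\tilde N=\{0\}$. Thus $T$ and $F$ satisfy all the hypotheses of Lemma~\ref{06l 09}, which already gives $T+F\in{\tilde{\mathcal{M}}\Phi}_{0}^{gc}(H_{\mathcal A})$ and, more importantly, supplies the internal decompositions built in its proof. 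Since here $\ker T\cong Im T^{\perp}$ holds \emph{exactly}, the auxiliary submodules $R,R'$ of that proof may be taken trivial, so the argument collapses to the following data: $\ker T=K\oplus M$ and $\ker(T+F)=K\oplus M'$ with $K=\ker T\cap\ker F$ and $M,M'$ finitely generated, together with $Im T^{\perp}\oplus N\cong Im(T+F)^{\perp}\oplus N'$ with $N,N'$ finitely generated.

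Next I would feed in the two hypotheses. On the kernel side,
$$\ker(T+F)\oplus M=(K\oplus M')\oplus M\cong(K\oplus M)\oplus M'=\ker T\oplus M'\cong H_{\mathcal A}\oplus M',$$
and by Kasparov's stabilization theorem \cite{MT} one has $H_{\mathcal A}\oplus M'\cong H_{\mathcal A}$, so that $\ker(T+F)\oplus M\cong H_{\mathcal A}$ with $M$ finitely generated. On the image side, substituting $Im T^{\perp}\cong H_{\mathcal A}$ into $Im T^{\perp}\oplus N\cong Im(T+F)^{\perp}\oplus N'$ and absorbing $N$ by stabilization yields, in the same way, $Im(T+F)^{\perp}\oplus N'\cong H_{\mathcal A}$ with $N'$ finitely generated.

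Both $\ker(T+F)$ and $Im(T+F)^{\perp}$ are now exhibited as Hilbert modules $L$ for which there is a finitely generated $G$ with $L\oplus G\cong H_{\mathcal A}$. The decisive ingredient is therefore the absorption principle $(\star)$: \emph{if $L\oplus G\cong H_{\mathcal A}$ with $G$ finitely generated, then $L\cong H_{\mathcal A}$}; equivalently, the orthogonal complement in $H_{\mathcal A}$ of a finitely generated complemented submodule is again isomorphic to $H_{\mathcal A}$. Granting $(\star)$, I obtain at once $\ker(T+F)\cong H_{\mathcal A}$ and $Im(T+F)^{\perp}\cong H_{\mathcal A}$; in particular $\ker(T+F)\cong Im(T+F)^{\perp}$, and since $Im(T+F)$ is closed by the hypotheses of Lemma~\ref{06l 09}, Definition~\ref{D01} gives $T+F\in{\mathcal{M}\Phi}_{0}^{gc}(H_{\mathcal A})$.

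I expect $(\star)$ to be the genuine obstacle. It does not follow from Eilenberg-swindle manipulations together with stabilization alone: those only ever return the weaker $L\oplus H_{\mathcal A}\cong H_{\mathcal A}$, and outright cancellation of the finitely generated summand fails over a general $\mathcal A$. The honest argument is topological: realizing $L=G^{\perp}$ inside $H_{\mathcal A}=\bigoplus_{n}\mathcal A$, one approximates a finite generating set of $G$ by finitely supported vectors, invokes that sufficiently close projections in $B^{a}(H_{\mathcal A})$ are unitarily equivalent to move $G$ into the first $N$ coordinates, and then peels off the tail $\bigoplus_{n>N}\mathcal A\cong H_{\mathcal A}$, absorbing the finitely generated remainder by \cite{MT}. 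I would either cite this as a known stabilization fact or isolate it as a short lemma placed just before the corollary.
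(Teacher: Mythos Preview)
Your proof is correct and follows the same route as the paper's, including the reduction via Lemma~\ref{06l 09}, the absorption of finitely generated summands into $H_{\mathcal A}$ by Kasparov stabilization, and the final appeal to the principle you call $(\star)$. The only point worth noting is that $(\star)$ is not an obstacle at all: it is precisely the Dupre--Fillmore Theorem \cite[Theorem 1.4.5]{MT}, which the paper simply cites at the corresponding step (and your sketched argument is indeed how that theorem is proved). So there is no need to isolate it as a new lemma; a one-line citation suffices.
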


\begin{proof}
	Notice that, since $T \in {\mathcal{M}\Phi}_{0}^{gc}  (  H_{\mathcal{A}}   )  $ by hypothesis, we already have that $\ker T \cong Im T^{\perp}  ,$ so the additonal assumption is that $\ker T$ and $Im T^{\perp}$  are isomorphic to $H_{\mathcal{A}}.$ 
	By the proof of Lemma \ref{06l 09} (and using the same notation), since $\mathcal{M} \Phi_{0}^{gc}  (  H_{\mathcal{A}} ) \subseteq \tilde{\mathcal{M}} \Phi_{0}^{gc}  (  H_{\mathcal{A}} ),$ we have 
	$$\ker(T+F) \oplus M \oplus N \oplus R \cong \ker T \oplus M^{\prime} \oplus N \oplus R $$ $$\cong Im T^{\perp} \oplus M^{\prime} \oplus N \oplus R^{\prime} \cong Im (T+F)^{\perp} \oplus M^{\prime} \oplus N^{\prime} \oplus R^{\prime}  .$$ 
	Since $M,N,R,M^{\prime},N^{\prime},R^{\prime}  $ are finitely generated Hilbert submodules and $\ker T \cong Im T^{\perp} \cong H_{\mathcal{A}}  $ by assumption, by the Kasparov stabilization Theorem \cite[Theorem 1.4.2]{MT} we have $$H_{\mathcal{A}} \cong \ker T \oplus M^{\prime} \oplus N \oplus R \cong Im T^{\perp} \oplus M^{\prime} \oplus N \oplus R^{\prime}  .$$ 
	Hence $$H_{\mathcal{A}} \cong \ker(T+F) \oplus M \oplus N \oplus R \cong Im (T+F)^{\perp} \oplus M^{\prime} \oplus N^{\prime} \oplus R^{\prime}    .$$ 
	By the Dupre-Filmore  Theorem \cite[Theorem 1.4.5]{MT}, it follows easily that 
	$$\ker(T+F) \cong Im (T+F)^{\perp} \cong H_{\mathcal{A}}  .$$ 
\end{proof}

\begin{corollary}   \label{11r 025}        
	Let $H$ be a separable infinite dimensional Hilbert space and $T$ be a generalized Weyl operator on $H.$ If $\ker T (\cong Im T^{\perp}) $ is infinite  dimensional, and $F$ is a finite rank operator on $H,$ then $T+F$ is generalized Weyl.
\end{corollary}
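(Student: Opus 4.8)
The plan is to obtain this corollary as the scalar special case $\mathcal{A}=\mathbb{C}$ of Corollary \ref{06c 05}. Regarding $H$ as the standard Hilbert $\mathbb{C}$-module $H_{\mathbb{C}}$, every bounded operator is adjointable, ``finitely generated'' means finite dimensional, and the condition $F\in{\mathcal{M}\Phi}_{0}^{gc}(H_{\mathbb{C}})$ is precisely the statement that $F$ is generalized Weyl in the sense of Djordjevi\'{c}. Thus it suffices to check that $T$ and $F$ satisfy the hypotheses of Corollary \ref{06c 05} (equivalently, of Lemma \ref{06l 09}), after which the conclusion $\ker(T+F)\cong Im(T+F)^{\perp}$ --- i.e.\ that $T+F$ is generalized Weyl --- follows at once.

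The hypotheses not involving closedness are immediate. Since $T$ is generalized Weyl we already have $\ker T\cong Im T^{\perp}$; as $\ker T$ is infinite dimensional and $H$ is separable, $\ker T$ is a separable infinite dimensional Hilbert space, so $\ker T\cong Im T^{\perp}\cong H$, which is exactly the extra assumption of Corollary \ref{06c 05}. Since $F$ has finite rank, $Im F$ is finite dimensional, hence closed and finitely generated, while $\ker F$ has finite codimension, so $\ker F^{\perp}$ is finite dimensional. Consequently $P(\ker T)$ and $P(\ker(T+F))$, being subspaces of the finite dimensional space $\ker F^{\perp}$, are automatically closed.

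The only real work is to verify that $T(\ker F)$ and $Im(T+F)$ are closed, and I expect this to be the main point where the finite-rank hypothesis is needed. First I would observe that $\ker F$ has finite codimension, so $\ker F+\ker T$ is the sum of a closed subspace and a finite dimensional one, and is therefore closed. Writing $L=\ker F+\ker T$, we have $T(\ker F)=T(L)$ because $T$ annihilates $\ker T\subseteq L$; decomposing $L=\ker T\oplus(L\ominus\ker T)$ with $L\ominus\ker T\subseteq\ker T^{\perp}$ and using that $T|_{\ker T^{\perp}}$ is bounded below (as $Im T$ is closed), one sees that $T(L)=T(L\ominus\ker T)$ is closed. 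Finally, since $F$ vanishes on $\ker F$, we have $Im(T+F)=T(\ker F)+(T+F)(\ker F^{\perp})$, a sum of the closed subspace $T(\ker F)$ and the finite dimensional subspace $(T+F)(\ker F^{\perp})$, hence closed. With all the hypotheses of Lemma \ref{06l 09} verified, Corollary \ref{06c 05} applied with $\mathcal{A}=\mathbb{C}$ yields $\ker(T+F)\cong Im(T+F)^{\perp}\cong H$, so $T+F$ is generalized Weyl.
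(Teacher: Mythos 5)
Your proposal is correct and follows essentially the same route as the paper: both reduce the statement to Corollary \ref{06c 05} with $\mathcal{A}=\mathbb{C}$ and verify its closedness hypotheses using the finite rank of $F$ (finite dimensionality of $\ker F^{\perp}$ and of $Im F$). The only difference is cosmetic: where the paper cites the Kato theorem \cite[Corollary 1.1.7]{ZZRD} to get that $T(\ker F)$ is closed, you give a direct argument via $L=\ker F+\ker T$ and the fact that $T$ is bounded below on $\ker T^{\perp}$, which is equally valid.
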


\begin{proof}
	Since $Im T= T(\ker F)+ T (\ker F^{\perp}) $ and $dim T (\ker F^{\perp}) < \infty, $ by Kato Theorem  \cite[Corollary 1.1.7]{ZZRD} applied on the operator $T_{\mid_{\ker F}}:\ker F \rightarrow Im T ,$ we get that $T (\ker F)$ is closed. Hence, since $Im (T+F)=T(\ker F)+(T+F)(\ker F^{\perp})  $ and $\dim (T+F) (\ker F^{\perp})< \infty,$  by  \cite[Lemma 1.1.2]{ZZRD} we must have that $Im (T+F)$ is closed. 
	
	Finally, since $P$ is finite rank operator (where $P$ is the orthogonal projection onto $\ker F^{\perp},$ it follows that $P (\ker T)$ and $P (\ker (T+F))$ are closed. Therefore, by Corollary \ref{06c 05} we conclude that $T+F$ is generalized Weyl.
\end{proof}

\begin{remark}
	Corollary \ref{11r 025} is actually the main statement in \cite[Theorem 2]{DDj2} Indeed, if $\ker T(\cong Im T^{\perp}) $ is finite dimensional, then $T$ is Weyl in the classical sense, so it is well known that $T+F$ is also Weyl in this case. Therefore, the proof of \cite[Theorem 2]{DDj2} deals only with the situation when $\ker T(\cong Im T^{\perp}) $ is infinite dimensional.
\end{remark}

\begin{lemma} \label{06l 10}    
	Let $T \in \mathcal{M}\Phi (H_{\mathcal{A}} )$  and suppose that $Im T$ is closed. Then
	$ T \in {\tilde{\mathcal{M}}\Phi}_{0}^{gc}  (H_{\mathcal{A}} )  .$
\end{lemma}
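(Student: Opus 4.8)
The plan is to reduce everything to showing that $\ker T$ and $Im T^{\perp}$ are both finitely generated. Once that is done the conclusion is immediate: taking $N := Im T^{\perp}$ and $\tilde N := \ker T$ in Definition \ref{06d 13}, the required isomorphism $N \oplus \ker T \cong \tilde N \oplus Im T^{\perp}$ is just the flip $Im T^{\perp} \oplus \ker T \cong \ker T \oplus Im T^{\perp}$, and $Im T$ is closed by hypothesis, so $T \in \tilde{\mathcal{M}}\Phi_0^{gc}(H_{\mathcal{A}})$.

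First I would unpack the two hypotheses. Since $Im T$ is closed, \cite[Theorem 2.3.3]{MT} gives the orthogonal splittings $H_{\mathcal{A}} = \ker T \oplus (\ker T)^{\perp}$ and $H_{\mathcal{A}} = Im T \oplus Im T^{\perp}$, so in particular $\ker T$ and $Im T^{\perp}$ are genuine Hilbert submodules. Since $T \in \mathcal{M}\Phi(H_{\mathcal{A}})$, I would fix a Fredholm decomposition as in Definition \ref{D D05}, namely $H_{\mathcal{A}} = M_1 \tilde\oplus N_1 \xrightarrow{T} M_2 \tilde\oplus N_2 = H_{\mathcal{A}}$ with $T = \diag(T_1, T_4)$, $T_1$ an isomorphism and $N_1, N_2$ finitely generated. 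Reading off this matrix gives the two inclusions that drive the argument: $\ker T = \ker T_4 \subseteq N_1$ (because $T_1$ is injective) and $M_2 = T_1(M_1) \subseteq Im T$.

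For the kernel, I apply Lemma \ref{D L18} to the splitting $H_{\mathcal{A}} = \ker T \tilde\oplus (\ker T)^{\perp}$ together with the inclusion $\ker T \subseteq N_1$, obtaining $N_1 = \ker T \tilde\oplus \big((\ker T)^{\perp} \cap N_1\big)$; thus $\ker T$ is a direct summand of the finitely generated module $N_1$ and is therefore finitely generated. For the cokernel, I apply Lemma \ref{D L18} to the splitting $H_{\mathcal{A}} = M_2 \tilde\oplus N_2$ together with the inclusion $M_2 \subseteq Im T$, obtaining $Im T = M_2 \tilde\oplus (N_2 \cap Im T)$. Feeding this into $H_{\mathcal{A}} = Im T \oplus Im T^{\perp}$ produces a second complement of $M_2$, namely $H_{\mathcal{A}} = M_2 \tilde\oplus \big((N_2 \cap Im T) \tilde\oplus Im T^{\perp}\big)$. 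Since any two topological complements of a fixed submodule are isomorphic (each maps isomorphically onto $H_{\mathcal{A}}/M_2$), I conclude $(N_2 \cap Im T) \tilde\oplus Im T^{\perp} \cong N_2$, whence $Im T^{\perp}$, being a direct summand of a module isomorphic to the finitely generated $N_2$, is finitely generated.

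The steps are individually short, and the genuine content is isolated to the cokernel argument: the closedness of $Im T$ is precisely what makes $Im T^{\perp}$ available and what lets me exhibit it as a summand of a complement of $M_2$. The two supporting facts I expect to state explicitly are that a module direct summand of a finitely generated Hilbert $\mathcal{A}$-module is again finitely generated (project the generators), and that two complements of a fixed submodule are isomorphic; both are routine, but they are where all the finite generation is actually produced.
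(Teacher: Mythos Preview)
Your proof is correct, and it differs from the paper's in two places. First, for the finite generation of $\ker T$ and $Im T^{\perp}$ the paper simply invokes \cite[Lemma 12]{IS5}; you instead reprove that fact in place using the $\mathcal{M}\Phi$-decomposition together with Lemma~\ref{D L18} and the ``two complements are isomorphic'' observation. Your argument is essentially the content of that cited lemma, so this is more self-contained but not genuinely new. Second, and more interestingly, once $\ker T$ and $Im T^{\perp}$ are known to be finitely generated, the paper appeals to \cite[Theorem 2.7.5]{MT} to embed both as direct summands of a common $L_{n}$, producing finitely generated complements $P,P'$ with $P \oplus \ker T \cong L_{n} \cong P' \oplus Im T^{\perp}$. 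You bypass this entirely by taking $N = Im T^{\perp}$ and $\tilde N = \ker T$ and using the flip isomorphism, which is strictly simpler and fully adequate for Definition~\ref{06d 13}. The paper's route has the mild advantage of exhibiting both modules inside a free module of the same rank (which is closer in spirit to how the index computation in Lemma~\ref{11r l01} proceeds), but for the bare statement of Lemma~\ref{06l 10} your flip is the cleaner argument.
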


\begin{proof} 
	By \cite[Lemma 12]{IS5}, since $Im T$ is closed and $T \in \mathcal{M}\Phi (H_{\mathcal{A}} ) ,$ we have that $ \ker T \text{ and } Im T^{\perp}$ are then finitely generated. By \cite[Theorem 2.7.5]{MT} we can find an $n \in \mathbb{N}  $  such that  
	$$L_{n}=P \tilde{\oplus} p_{n}(\ker T)=P^{\prime} \tilde{\oplus} p_{n}(Im T^{\perp})$$  and 
	$$ p_{n} (\ker T) \cong \ker T,p_{n} (Im T^{\perp}) \cong Im T^{\perp},$$ 
	where $P$ and $P^{\prime}$ are finitely generated Hilbert submodules and $p_{n}$ denotes the orthogonal projection onto $L_{n}$. It follows that $P \oplus \ker T \cong P^{\prime} \oplus Im T^{\perp} .$
\end{proof}

We present now the definition of regular operators on Banach spaces.

\begin{definition} \label{06d 21}    
	Let $X, Y$ be Banach spaces and $T \in B(X,Y). $ Then $T$ is called a regular operator if $T(X)$ is closed in $Y$ and in addition $ T^{-1}(0)$ and $T(X)$ are complementable in $X$ and $Y,$ respectively.
\end{definition}

\begin{remark} \label{D r05}       
	It is not hard to see that $T$ is a regular operator if and only if $T$ admits a generalized inverse, that is if and only if there exists some $T^{\prime} \in B(Y,X) $ such that $TT^{\prime}T=T.$ In this case we have that $TT^{\prime}$ and $T^{\prime}T$ are the projections onto $T(X)$ and complement of $T^{-1}(0),$ respectively, and moreover, $T^{\prime}TT^{\prime}=T^{\prime}.$ Thus, Definition \ref{06d 21} corresponds to the definition of regular operators on Banach spaces given in \cite{H}.
\end{remark}

We can apply the arguments from the proof of Lemma \ref{06l 09} to obtain an extension of \cite[Theorem 2]{DDj2} to the case of regular operators on Banach spaces.\\
First we give the following definition.

\begin{definition}
	Let $X, Y$ be Banach spaces. We set $\Phi_{0}^{gc}(X,Y) $ \index{$\Phi_{0}^{gc}(X,Y) $}  to be the set of all regular operators $T \in B(X,Y) $ satisfying that there exist finite dimensional Banach spaces $Z_{1} $ and $ Z_{2}$ with the property that $\ker T \oplus Z_{1} \cong Im T^{\circ} \oplus Z_{2},$ where $Im T^{\circ} $ stands for the complement of $Im T$ in $Y.$ 
\end{definition}

Then we give the following extension of \cite[Theorem 2]{DDj2} to the case of regular operators on Banach spaces. 

\begin{lemma} \label{DP L40}    
	Let $X, Y$ be Banach spaces and $T \in \Phi_{0}^{gc}(X,Y).$ Suppose that $F$ is a finite rank operator from $X$ into $Y.$ Then $T+F \in \Phi_{0}^{gc}(X,Y).$
\end{lemma}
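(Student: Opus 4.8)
The plan is to transcribe the proof of Lemma \ref{06l 09} into the Banach setting, systematically replacing orthogonal complements by topological complements, the phrase ``finitely generated'' by ``finite dimensional,'' and, crucially, \emph{deriving} the various closedness statements (which were hypotheses in Lemma \ref{06l 09}) from the finiteness of the rank of $F$. First I would record the data. Since $T$ is regular, I write $X = \ker T \oplus C$ and $Y = Im T \oplus Im T^{\circ}$ with $Im T$ closed. Since $F$ has finite rank, $\ker F$ has finite codimension, say $X = \ker F \oplus V$ with $\dim V < \infty$, and $Im F$ is finite dimensional; moreover $(T+F)_{\mid_{\ker F}} = T_{\mid_{\ker F}}$, so $T(\ker F) \subseteq Im T \cap Im(T+F)$. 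The role played by \cite[Theorem 2.3.3]{MT} in the $C^{*}$ proof is here taken over by the regularity of $T$ together with two elementary Banach-space facts: a closed subspace plus a finite-dimensional subspace is closed, and a closed finite-codimensional subspace (as well as any finite-dimensional subspace) is complemented.

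The key step is closedness. Because $Im T = T(\ker F) + T(V)$ with $\dim T(V) < \infty$, the range of the bounded operator $T_{\mid_{\ker F}} \colon \ker F \to Im T$ has finite codimension in the Banach space $Im T$; a Kato-type argument (the range of a bounded operator of finite codimension is closed) gives that $T(\ker F)$ is closed. Then $Im(T+F) = T(\ker F) + (T+F)(V)$ is a closed subspace plus a finite-dimensional one, hence closed. Inside $Im T$ and inside $Im(T+F)$ the closed finite-codimensional subspace $T(\ker F)$ is complemented, so I obtain finite-dimensional $N, N'$ with $Im T = T(\ker F) \oplus N$ and $Im(T+F) = T(\ker F) \oplus N'$. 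This shows $T(\ker F)$, and hence $Im(T+F)$, is complemented in $Y$, and comparing two complements of $T(\ker F)$ yields $N \oplus Im T^{\circ} \cong N' \oplus Im(T+F)^{\circ}$ (unlike the $C^{*}$ case, where orthogonal complements coincide, I only obtain an isomorphism of complements, which is all that is needed).

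For the kernels I would use that $\ker T \cap \ker F = \ker(T+F) \cap \ker F$ has finite codimension in both $\ker T$ and $\ker(T+F)$; being closed and finite-codimensional it is complemented in each, giving $\ker T = (\ker T \cap \ker F) \oplus M$ and $\ker(T+F) = (\ker T \cap \ker F) \oplus M'$ with $M, M'$ finite dimensional. Since $\ker T \cap \ker F$ is complemented in $X$ (being finite-codimensional in the complemented subspace $\ker T$), the Banach analogue of Lemma \ref{D L18} shows $\ker(T+F)$ is complemented in $X$; combined with the previous paragraph this proves $T+F$ is regular. Finally, feeding the defining isomorphism $\ker T \oplus Z_{1} \cong Im T^{\circ} \oplus Z_{2}$ of $T \in \Phi_{0}^{gc}(X,Y)$ into the chain
$$\ker(T+F) \oplus M \cong (\ker T \cap \ker F) \oplus M' \oplus M \cong \ker T \oplus M'$$
and combining with $N \oplus Im T^{\circ} \cong N' \oplus Im(T+F)^{\circ}$, I obtain
$$\ker(T+F) \oplus (M \oplus N \oplus Z_{1}) \cong Im(T+F)^{\circ} \oplus (M' \oplus N' \oplus Z_{2}),$$
so the required finite-dimensional spaces are $Z_{1}' = M \oplus N \oplus Z_{1}$ and $Z_{2}' = M' \oplus N' \oplus Z_{2}$, whence $T+F \in \Phi_{0}^{gc}(X,Y)$.

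The main obstacle I anticipate is precisely the closedness of $Im(T+F)$ together with $T(\ker F)$: in Lemma \ref{06l 09} these were assumptions, whereas here they must be extracted from the finite rank of $F$, which is exactly the purpose of the ``finite-codimensional range is closed'' result and the ``closed plus finite-dimensional is closed'' lemma. A secondary point requiring care is that topological complements in Banach spaces are not unique, so each step of the $C^{*}$ proof that relied on the uniqueness of orthogonal complements must be rephrased as an isomorphism statement (any two complements of a complemented subspace are isomorphic); this suffices because membership in $\Phi_{0}^{gc}$ is defined only up to isomorphism with finite-dimensional corrections.
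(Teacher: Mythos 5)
Your proposal is correct and follows essentially the same route as the paper's proof: closedness of $T(\ker F)$ via the Kato finite-codimension argument, closedness of $Im(T+F)$ as a closed subspace plus a finite-dimensional one, finite-dimensional complements $N,N'$ of $T(\ker F)$ in $Im T$ and $Im(T+F)$, the kernel decompositions via the projection along $\ker F$, and the same final chain of isomorphisms (the paper only spells out the complementability of $Im(T+F)$ via the projection $Q$ onto a complement of $T(\ker F)$ a bit more explicitly, and your replacement of the equality of orthogonal complements by the isomorphism of two topological complements of $T(\ker F)$ is exactly the adjustment the paper makes implicitly).
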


\begin{proof}
	Since $F$ is finite rank operator, it is regular, i.e. $Im F$ is closed, $\ker F$ and $Im F$ are complementable in $X$ and $Y,$ respectively. Let $\ker F^{\circ}$ denote complement of $\ker F$ in $X.$ As $Im T$ is closed by assumption and $Im T=T(\ker F)+T(\ker F^{\circ}),$ it follows that $T(\ker F)$ has finite co-dimension in $Im T,$ so, by the Kato Theorem \cite[Corollary 1.1.7]{ZZRD},  we have that $T (\ker F)$ is closed ( as $T(\ker F)=Im T_{\mid_{\ker F}}$ and $\ker F^{\circ} $ is finite dimensional). Hence, again using that $T(\ker F)$ has finite co-dimension, by part b) in \cite[Lemma 4.21]{RW} we obtain that $Im T=T(\ker F) \tilde \oplus N  ,$ where $N$ is a finite dimensional subspace. Now, since $T(\ker F)$ is closed and $$Im(T+F)=T(\ker F)+(T+F)(\ker F^{\circ}),$$
	by \cite[Lemma 1.1.2]{ZZRD}  we get that $Im (T+F)$ is closed as $ (T+F)(\ker F^{\circ})$ is finite dimensional.  By the similar arguments as above, we deduce then that $ Im(T+F)=T(\ker F) \tilde \oplus N^{\prime}$ for some finite dimensional subspace $N^{\prime}.$ Since 
	$$Y=Im T  \tilde \oplus Im T^{\circ}=T(\ker F) \tilde \oplus N \tilde \oplus Im T^{\circ} ,$$ where $Im T^{\circ} $ 
	stands for the complement of $Im T$ in $Y$, we see that $T (\ker F)$ is complementable in $Y.$ Let $T(\ker F)^{\circ} $ denote complement of $T (\ker F)$ in $Y$ and $Q$ be the projection onto $T(\ker F)^{\circ} $ along $T (\ker F).$ Then $Q_{\mid_{N^{\prime}}} $ is injective. As $N^{\prime} $ is finite dimensional, so is $Q(N^{\prime}),$ hence $Q(N^{\prime}) $ is closed and $T(\ker F)^{\circ}=Q(N^{\prime}) \tilde \oplus V $ for some closed subspace $V.$ 
	This follows by part a) in \linebreak[4] \cite[Lemma 4.21]{RW} . Since $Q_{\mid_{N^{\prime}}} $ is then an isomorphism onto $Q(N^{\prime}),$ by the same arguments as in the proof of \cite[Proposition 3]{IS5} we deduce that $$Y=T(\ker F) \tilde \oplus N^{\prime}  \tilde \oplus V = Im (T+F) \tilde \oplus V,$$
	so $Im (T+F)$ is complementable. 
	
	Next, let $P$ denote the projection onto $\ker F^{\circ} $ along $\ker F.$ Then $P_{\mid_{\ker T}} $ and $P_{\mid_{\ker (T+F)}} $ are finite rank operators, hence regular. It follows that their kernels are complementable, hence by the same arguments	as in the proof of Lemma \ref{06l 09} we deduce that 
	$$\ker T=(\ker T \cap \ker F) \tilde \oplus M \text{ and } \ker (T+F)=(\ker T \cap \ker F) \tilde \oplus M^{\prime} $$ for some finite dimensional subspaces $M$ and $M^{\prime}.$ Since $\ker T$ is complementable in $X$ as $T$ is regular, then $\ker T \cap \ker F$  is complementable in $X,$ so by the similar arguments as above we can deduce that $\ker (T+F)$ is complementable in $X.$ Hence $T+F$ is a regular operator. Moreover, proceeding in the same way as in the proof of Lemma \ref{06l 09} by considering chain of isomorphisms, we conclude that $T+F \in \Phi_{0}^{gc}(X,Y).$
\end{proof}

\begin{remark}  \label{DP R16}   
	If $H$ is a Hilbert space, it follow that if $F \in \Phi_{0}^{gc}(H)    $ and $\ker F $ or $Im F^{\perp}$ are infinite-dimensional, then $\ker F \cong Im F^{\perp}  .$ Hence it is not hard to see that Lemma \ref{DP L40} is indeed an extension of \cite[Theorem 2]{DDj2}.
\end{remark}

Next we recall the definition of generalized Weyl operators on Banach spaces. 

\begin{definition}  \label{06d 22} 
	\cite{DDj2} Let $X, Y$ be Banach spaces and $T \in B(X,Y).$ Then we say that $T$ is generalized Weyl, if $T(X)$ is closed in $Y,$ and $T^{-1}(0)$ and $Y{/{T(X)}} $ are mutually isomorphic Banach spaces. 
\end{definition}

We give then the following proposition as an extension of \cite[Theorem 1]{DDj2} to the case of regular operators on Banach spaces.

\begin{proposition} \label{DP P16}   
	Let $X, Y, Z$ be Banach spaces and let  $ T \in  B(X, Y), S \in B(Y, Z).$ Suppose that $T, S, ST$ are regular, that is $T (X), S (Y), ST (X)$ are closed and $T, S, ST$ admit generalized inverse. If  $T$ and $S$ are  generalized Weyl operators, then $ST$ is a generalized Weyl operator.
\end{proposition}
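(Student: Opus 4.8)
The plan is to follow the chain-of-isomorphisms strategy already used in the proofs of Lemma \ref{06l 09} and Lemma \ref{DP L40}, expressing both $\ker(ST)$ and $Z/S(T(X))$ as direct sums of the same building blocks and reading off the required isomorphism. Throughout I shall use that a regular operator decomposes its domain and codomain: by Remark \ref{D r05}, fix generalized inverses of $T$ and $S$, giving topological direct-sum decompositions $X = \ker T \oplus X_{1}$, $Y = T(X) \oplus Y_{1}$, $Y = \ker S \oplus Y_{2}$ and $Z = S(Y) \oplus Z_{1}$, where $T_{\mid X_{1}} \colon X_{1} \to T(X)$ is a topological isomorphism by the open mapping theorem. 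The generalized-Weyl hypotheses on $T$ and $S$ enter only as $\ker T \cong Y/T(X)$ and $\ker S \cong Z/S(Y)$ (Definition \ref{06d 22}).

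First I would treat the kernel. Writing $x = u+v$ with $u \in \ker T$, $v \in X_{1}$, one has $STx = STv$, so $\ker(ST) = \ker T \oplus (X_{1} \cap T^{-1}(\ker S))$; since $T_{\mid X_{1}}$ carries $X_{1} \cap T^{-1}(\ker S)$ isomorphically onto $A := \ker S \cap T(X)$, this yields the topological isomorphism $\ker(ST) \cong \ker T \oplus A \cong (Y/T(X)) \oplus A$, the last step by generalized-Weylness of $T$.

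Next I would treat the cokernel along the chain $S(T(X)) \subseteq S(Y) \subseteq Z$. The decisive observation is that $\ker S + T(X) = S^{-1}(S(T(X)))$, which is closed precisely because $S(T(X)) = ST(X)$ is closed --- this is where the regularity of $ST$ is indispensable --- and that $S$ then induces a topological isomorphism $S(Y)/S(T(X)) \cong Y/(\ker S + T(X))$. Using the Banach-space analogue of Lemma \ref{D L18} (the modular law, cf. \cite[Lemma 4.21]{RW}) together with the above decompositions, I would establish the complementability facts that $A$ is complemented in both $T(X)$ and $\ker S$, that $S(T(X))$ is complemented in $S(Y)$, and that $\ker S + T(X)$ is complemented in $Y$. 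These give the splittings $Z/S(T(X)) \cong (S(Y)/S(T(X))) \oplus (Z/S(Y)) \cong (S(Y)/S(T(X))) \oplus \ker S$ and $Y/T(X) \cong (\ker S/A) \oplus (Y/(\ker S + T(X)))$, where the second uses the isomorphism $(\ker S + T(X))/T(X) \cong \ker S/A$.

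Assembling these, and writing $D := Y/(\ker S + T(X)) \cong S(Y)/S(T(X))$, the left side becomes $\ker(ST) \cong (Y/T(X)) \oplus A \cong (\ker S/A) \oplus D \oplus A \cong \ker S \oplus D$, using $\ker S \cong A \oplus (\ker S/A)$; and the right side becomes $Z/S(T(X)) \cong D \oplus \ker S$. Hence $\ker(ST) \cong Z/S(T(X))$, which is exactly the assertion that $ST$ is generalized Weyl. The main obstacle is not the formal chain but verifying that all four splittings above are genuine topological direct-sum decompositions simultaneously; each reduces, via the modular law, to the complementations furnished by regularity, with the closedness of $\ker S + T(X) = S^{-1}(ST(X))$ (hence of $ST(X)$) as the load-bearing hypothesis.
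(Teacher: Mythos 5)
Your proposal is correct and follows essentially the same route as the paper: the paper establishes the same complementability facts (for $\ker S\cap T(X)$ in $T(X)$ and in $\ker S$, for $ST(X)$ in $S(Y)$, and for $\ker T$ in $(ST)^{-1}(0)$) via Lemma \ref{D L18} and then invokes the chain of isomorphisms from the proof of \cite[Proposition 3]{IS5}, which is precisely the chain you write out explicitly with $A=\ker S\cap T(X)$ and $D=S(Y)/ST(X)$. The only minor divergence is the source of the complementability of $\ker S\cap T(X)$ in $T(X)$: the paper exhibits $TU$ (with $U$ a generalized inverse of $ST$) as a generalized inverse of $S_{\mid_{T(X)}}$, whereas you transport the complement of $\ker(ST)$ through the isomorphism $T_{\mid_{X_{1}}}$; both are valid.
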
 

\begin{proof} 
	Since $T, S, ST$ are regular by assumption, their kernels and ranges are complementable in the respective Banach spaces $X, Y, Z.$ Moreover, observe that  $S_{\mid_{T(X)}}  $ is regular. Indeed, if $ U $ denotes the generalized inverse of $ST,$  then for any $x$ in $X,$ we have $S T U S T (x)= ST (x),$ so it is easily seen that $T U$ is generalized inverse of $S_{\mid_{T(X)}} . $ Hence $(S_{\mid_{T(X)}})^{-1} (0) $ is complementable in $T(X).$  However, we have $(S_{\mid_{T(X)}})^{-1} (0) = S^{-1}(0) \cap T(X).$ Since $T(X)$ is complementable in $Y,$ because $T$ is regular, it follows that $ S^{-1}(0) \cap T(X)$ is complementable in $Y.$ By Lemma \ref{D L18} we have that $ S^{-1}(0) \cap T(X)  $ is then complementable in $ S^{-1}(0) .$  Moreover, $ST(X)$ is complementable in $S(Y)$ by Lemma \ref{D L18}, since $ST(X)$ is complementable in $Z.$  Finally, since $T^{-1}(0)  $ is complementable in $X,$ because $T$ is regular, and $T^{-1}(0) \subseteq ST^{-1}(0)  ,$ it follows again from Lemma \ref{D L18} that $T^{-1}(0)  $ is complementable in $ST^{-1}(0)  .$ Then we are in the position to apply exactly the same proof as in \cite[Proposition 3]{IS5}. 
\end{proof}

\begin{remark} \label{06r 02}   
	In general, if $X, Y, Z$ are Banach spaces and $F \in B(X,Y), G \in B(Y,Z),$ $GF \in B(X,Z)$ are regular operators, then we have that the sequence 
	$$0 \rightarrow \ker F \rightarrow \ker GF  \rightarrow \ker G  \rightarrow Im F^{\circ} \rightarrow Im GF^{\circ} \rightarrow Im G^{\circ} \rightarrow 0 $$ 
	is exact, where $ImF^{\circ},ImG^{\circ}$ and $ImGF^{\circ}$ denote the complements of $Im F,ImG $ and $ ImGF$ in the  respective Banach spaces. This can be deduced from the proof of \cite[Proposition 3]{IS5} and Proposition \ref{DP P16} or from \cite[Proposition 2.1]{KY} and \cite[Theorem 2.7]{KY}. If $G, F, GF$ are regular operators, then all the subspaces in the above sequence are complementable in the respective Banach spaces. From the exactness of the above sequence we may deduce as direct corollaries various results such as \cite[Theorem 1]{DDj2} and index theorem, Harte's ghost theorem in \cite{H} etc. 
\end{remark}

\begin{lemma}  \label{DP L42}    
	Let $ \tilde M   $ be a Hilbert $C^{*}$-module and $ F,D \in \tilde{\mathcal{M}\Phi}_{0}^{gc}(\tilde M) .$ If $Im DF$ is closed, then $DF \in \tilde{\mathcal{M}\Phi}_{0}^{gc}(\tilde M)  .$
\end{lemma}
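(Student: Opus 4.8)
The plan is to run the long exact sequence that underlies the composition theorem \cite[Proposition 3]{IS5} and to feed into it the two stable isomorphisms coming from $F, D \in \tilde{\mathcal{M}\Phi}_{0}^{gc}(\tilde M)$, taking care to keep the possibly non-finitely-generated summands matched term by term rather than appealing to cancellation. Since $Im F$ and $Im D$ are closed (as $F, D$ lie in the class) and $Im DF$ is closed by hypothesis, the Hilbert $C^{*}$-module analogue of the exact sequence in Remark \ref{06r 02}, produced exactly by the construction in the proof of \cite[Proposition 3]{IS5}, is available: there is a split exact sequence of adjointable $\mathcal{A}$-linear maps
$$0 \to \ker F \to \ker DF \to \ker D \to Im F^{\perp} \to Im DF^{\perp} \to Im D^{\perp} \to 0$$
all of whose connecting submodules are orthogonally complementable.

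First I would record the orthogonal decompositions that the splitting yields. Writing $K_{i}$ for the image of the $i$-th arrow, exactness together with the splitting gives
$$\ker F \cong K_{1}, \quad \ker DF \cong K_{1} \oplus K_{2}, \quad \ker D \cong K_{2} \oplus K_{3},$$
$$Im F^{\perp} \cong K_{3} \oplus K_{4}, \quad Im DF^{\perp} \cong K_{4} \oplus K_{5}, \quad Im D^{\perp} \cong K_{5}.$$
Next I would bring in the hypotheses: there are finitely generated submodules $N_{F}, \tilde N_{F}, N_{D}, \tilde N_{D}$ with $N_{F} \oplus \ker F \cong \tilde N_{F} \oplus Im F^{\perp}$ and $N_{D} \oplus \ker D \cong \tilde N_{D} \oplus Im D^{\perp}$, which in terms of the $K_{i}$ read
$$N_{F} \oplus K_{1} \cong \tilde N_{F} \oplus K_{3} \oplus K_{4}, \qquad N_{D} \oplus K_{2} \oplus K_{3} \cong \tilde N_{D} \oplus K_{5}.$$

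The conclusion then follows from a single chain of isomorphisms,
$$(N_{F} \oplus N_{D}) \oplus \ker DF \cong N_{F} \oplus N_{D} \oplus K_{1} \oplus K_{2} \cong \tilde N_{F} \oplus \tilde N_{D} \oplus K_{4} \oplus K_{5} \cong (\tilde N_{F} \oplus \tilde N_{D}) \oplus Im DF^{\perp},$$
where the middle isomorphism substitutes the two relations above (first $N_{F} \oplus K_{1}$, then $N_{D} \oplus K_{2} \oplus K_{3}$). Since $N_{F} \oplus N_{D}$ and $\tilde N_{F} \oplus \tilde N_{D}$ are finitely generated, and $Im DF$ is closed by hypothesis, this is exactly the defining condition for $DF \in \tilde{\mathcal{M}\Phi}_{0}^{gc}(\tilde M)$.

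I expect the main obstacle to be establishing the split exact sequence itself, that is, the orthogonal complementability of each $K_{i}$; this is where the closedness of $Im DF$, $Im F$ and $Im D$ enters, through \cite[Theorem 2.3.3]{MT} applied to the induced and restricted operators, as in the proof of \cite[Proposition 3]{IS5}. A second, more conceptual point I would be careful about is resisting the temptation to collapse the sequence to the crude alternating-sum isomorphism $\ker F \oplus \ker D \oplus Im DF^{\perp} \cong \ker DF \oplus Im F^{\perp} \oplus Im D^{\perp}$: combining that identity with the stable isomorphisms would leave the possibly non-finitely-generated summands $K_{3}, K_{4}, K_{5}$ (equivalently $Im F^{\perp}$ and $Im D^{\perp}$) on both sides, and since $K_{0}(\mathcal{A})$ need not satisfy the cancellation property one could not remove them. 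Retaining the finer, term-by-term decomposition through the $K_{i}$ is precisely what lets the infinite summands cancel by matching rather than by an appeal to cancellation.
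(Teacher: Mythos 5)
Your proposal is correct and follows essentially the same route as the paper: both obtain the term-by-term splittings of $\ker DF$, $\ker D$, $Im F^{\perp}$ and $Im DF^{\perp}$ from the construction in the proof of \cite[Proposition 3]{IS5} and then thread the two stable isomorphisms through them, so that the possibly non-finitely-generated summands are matched rather than cancelled. The paper writes the identical chain with the explicit submodules ($\ker D\cap Im F$, etc.) where you use the abstract $K_{i}$.
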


\begin{proof}
	Since $ F,D \in \tilde{\mathcal{M}\Phi}_{0}^{gc}(\tilde M)    $ by assumption, there exist finitely generated Hilbert submodules $  N, \tilde N ,N^{\prime}  $ and $ \tilde N^{\prime}   $ such that 
	$$N \oplus \ker F \cong \tilde N \oplus Im F^{\perp}  \text{ and }  N^{\prime} \oplus \ker D \cong \tilde N^{\prime} \oplus Im D^{\perp}  .$$ 
	By applying the arguments from the proof of \cite[Proposition 3]{IS5} and using the same notation, we obtain the following chain of isomorphisms:
	$$\ker DF \oplus N \oplus N^{\prime} \cong \ker F \oplus (\ker D \cap \ker F) \oplus N \oplus N^{\prime} $$ 
	$$\cong Im F^{\perp} \oplus (\ker D \cap Im F) \oplus \tilde N \oplus N^{\prime}   \cong          
	S(X) \oplus M \oplus (\ker D \cap F) \tilde \oplus \tilde N \oplus N^{\prime} $$
	$$\cong  S(X) \oplus \ker D \oplus \tilde N \oplus N^{\prime} \cong X \oplus Im D^{\perp} \oplus \tilde N \oplus \tilde N^{\prime} \cong Im DF^{\perp} \oplus \tilde N \oplus \tilde N^{\prime}.$$ 
\end{proof}

\begin{remark}  \label{DP R17}  
	As explained in the proof of Proposition \ref{DP P16} and Remark \ref{06r 02}, the proof of Proposition \cite[Proposition 3]{IS5} applies in the case of regular operators on Banach spaces. By combining this fact with the proof of Lemma \ref{DP L42} we can deduce that if $ T \in \Phi_{0}^{gc}(X,Y), S \in \Phi_{0}^{gc}(Y,Z)   $ and $ST$ is regular, then $ ST \in \Phi_{0}^{gc}(X,Z)   $ (where $X, Y$ and $Z$ are Banach spaces).
\end{remark}

\section{Semi-$C^{*}$-B-Fredholm operators}\label{S03}
In this section we consider semi-$C^{*}$-$B$-Fredholm operators and provide a generalization in this setting of \cite[Theorem 3.2]{BM} concerning compositions of semi-$B$-Fredholm operators. Moreover, we give necessary and sufficient conditions for composition of two closed range $C^{*}$-operators to have closed image. Also, we introduce examples of $C^{*}$-Fredholm operators with non-closed image.\\
First we recall the following definition.

\begin{definition} \label{D D08}   
	\cite[Definition 16]{IS5} Let $M$ be a Hilbert $\mathcal{A}$-module and $F \in B^{a}(M)  .$ Then $F$ is said to be an upper semi-$\mathcal{A}$-$B$-Fredholm operator if there exists some $n \in \mathbb{N} $ such that $Im F^{m} $ is closed for all $m \geq n$ and $F_{\mid_{Im F^{n}}}  $ is an upper semi-$\mathcal{A}$-Fredholm operator. Similarly, $F$ is said to be a lower semi-$\mathcal{A}$-$B$-Fredholm operator if the above conditions hold except that in this case we assume that  $F_{\mid_{Im F^{n}}}  $ is a lower semi-$\mathcal{A}$-Fredhlom operator and not an upper semi-$\mathcal{A}$-Fredholm operator.
\end{definition}

\begin{lemma}  \label{11r l01}        
	Let $M$ be a Hilbert $C^{*}$-module and $ F \in  {\mathcal{M}\Phi}  (M) .$ If $Im F$ is closed, then the index of $F$ is well defined. 
\end{lemma}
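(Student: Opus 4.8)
The plan is to exhibit a description of $\text{\rm index } F$ that makes no reference to the chosen Fredholm decomposition, and then to check that this intrinsic quantity really does coincide with $[N_1]-[N_2]$ for every decomposition as in Definition \ref{D D05}. First I would fix such a decomposition $M = M_1 \tilde\oplus N_1 \stackrel{F}{\longrightarrow} M_2 \tilde\oplus N_2 = M$, so that by Definition \ref{DP D08} the prescribed value is $[N_1]-[N_2]$, and then bring in the hypothesis that $Im F$ is closed. By \cite[Theorem 2.3.3]{MT} this yields the orthogonal splittings $M = \ker F \oplus \ker F^{\perp}$ and $M = Im F \oplus Im F^{\perp}$, so $\ker F$ and $Im F^{\perp}$ are orthogonally complementable. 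Using that $F_1$ is an isomorphism I would identify $\ker F = \ker F_4 \subseteq N_1$ and $Im F^{\perp} \cong M/Im F \cong N_2/Im F_4$, and then invoke (the analogue for $M$ of) \cite[Lemma 12]{IS5} to conclude that $\ker F$ and $Im F^{\perp}$ are finitely generated. Being finitely generated and orthogonally complementable, they are finitely generated projective, so $[\ker F]$ and $[Im F^{\perp}]$ are genuine elements of $K_0(\mathcal{A})$.

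The heart of the argument is then the identity
$$ [N_1]-[N_2] = [\ker F] - [Im F^{\perp}] \quad \text{in } K_0(\mathcal{A}). $$
To establish it I would use Lemma \ref{D L18} together with the orthogonal complementability of $\ker F$ to write $N_1 = \ker F \tilde\oplus P$ for a finitely generated submodule $P$ on which $F_4$ is injective with image $Im F_4$, and dually organise $N_2$ around the summands $Im F_4$ and $Im F^{\perp}$, using that $Im F_4 = N_2 \cap Im F$ is closed. Passing to $K_0$ classes and applying additivity over these direct-sum decompositions, the two contributions of $Im F_4$ cancel, leaving exactly $[N_1]-[N_2] = [\ker F]-[Im F^{\perp}]$; this is the same chain-of-isomorphisms bookkeeping already carried out in the proofs of Lemma \ref{06l 10} and Lemma \ref{06l 09}. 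Since the right-hand side depends only on $F$ and not on the decomposition, the value $[N_1]-[N_2]$ is forced to be the same for every Fredholm decomposition, which is precisely the assertion that the index is well defined, in agreement with Theorem \ref{DP T07} in the model case $M = H_{\mathcal{A}}$.

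I expect the main obstacle to lie in the $K_0$-theoretic bookkeeping rather than in the topology. The decompositions furnished by Definition \ref{D D05} are only topological ($\tilde\oplus$) and need not be orthogonal, so $N_1, N_2$ and the intermediate module $Im F_4$ are a priori merely finitely generated and complemented, and one must verify that each of them is in fact finitely generated projective before the additivity $[A \oplus B] = [A]+[B]$ may be legitimately applied. This is exactly the point at which the closed-range hypothesis is indispensable: through \cite[Theorem 2.3.3]{MT} it upgrades the purely topological splittings to orthogonal ones, guaranteeing projectivity of the relevant summands and hence that every class appearing in the chain genuinely lives in $K_0(\mathcal{A})$. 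A secondary point to treat with care is that $M$ is an arbitrary Hilbert $C^{*}$-module rather than $H_{\mathcal{A}}$; I would either argue intrinsically as above or, when convenient, transport the situation to the standard module by Kasparov stabilization and appeal directly to Theorem \ref{DP T07}.
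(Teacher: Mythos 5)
Your proposal is correct and takes essentially the same route as the paper: both reduce the claim to the decomposition-independent identity $[N_{1}]-[N_{2}]=[\ker F]-[Im F^{\perp}]$ by splitting $N_{1}=\ker F\oplus\tilde{N_{1}}$ with $F(\tilde{N_{1}})=F(N_{1})=Im F\cap N_{2}$ and $N_{2}=F(N_{1})\oplus\tilde{N_{2}}$ with $\tilde{N_{2}}\cong Im F^{\perp}$, then cancelling the $F(N_{1})$ contributions. The only cosmetic difference is that the paper obtains the orthogonal complementability of $\ker F$ in $N_{1}$ and of $F(N_{1})$ in $N_{2}$ by noting that $F_{\mid_{N_{1}}}$ is adjointable (via self-duality of the finitely generated module $N_{1}$ and \cite[Theorem 2.3.3]{MT}), whereas you route the bookkeeping through Lemma \ref{D L18}.
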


\begin{proof}
	Let
	$$M=M_{1} \tilde{\oplus} N_{1} \stackrel{ {F}}{\longrightarrow} M_{2} \tilde{\oplus} N_{2} =M $$
	be an $\mathcal{M}\Phi$-decomposition for $F.$ Since $N_{1} $ is finitely generated, it is self-dual, hence $F_{\mid_{N_{1}}} $ is adjointable by \cite[Corollary 2.5.3]{MT}. It is not hard to see that $F(N_{1})=ImF \cap N_{2} ,$ hence $F(N_{1}) $ is closed. By \cite[Theorem 2.3.3]{MT}, $F(N_{1})$ is orthogonally complementable in $N_{2} ,$ so $N_{2}= F(N_{1}) \oplus \tilde{N_{2}}$ for some closed submodule $\tilde{N_{2}} .$ Moreover, $\ker F_{\mid_{N_{1}}} $ is orthogonally complementable in $N_{1} $ again by \cite[Theorem 2.3.3]{MT}. Now, it is not hard to see that $\ker F_{\mid_{N_{1}}}=\ker F ,$ so we have that $N_{1}=\ker F \oplus  \tilde{N_{1}}  $ for some closed submodule $ N_{1}.$ Clearly, $F$ maps $\tilde{N_{1}} $ isomorphically onto $F(N_{1}) .$ In addition, by \cite[Theorem 2.3.3]{MT} we have 
	$$M=Im F \oplus Im F^{\perp}=M_{2} \tilde{ \oplus} F(N_{1}) \tilde{ \oplus} \tilde{N_{2}}=Im F \tilde{ \oplus} \tilde{N_{2}} ,$$ 
	which gives $Im F^{\perp} \cong \tilde{N_{2}}.$ Since $\tilde{N_{1}} \cong F(N_{1}) ,$ we get $$[N_{1}]-[N_{2}]=[\ker F] - [ \tilde{N_{2}}]=[\ker F] - [Im F^{\perp}] $$ 
	in $\mathcal{K}_{0} (\mathcal{A}) ,$ so $index F$ is independent of $\mathcal{M}\Phi$-decomposition for $F.$
	
\end{proof}

In \cite{IS5}the index of a $\mathcal{A}$-$B$-Fredholm operator $F$ on $H_{\mathcal{A}} $ is defined as $index F:= index F_{\mid_{ImF^{n}}}  $ where $n$ is such that $ImF^{m} $ is closed for all $m \geq n $ and $F_{\mid_{ImF^{n}}} $ is $\mathcal{A}$-Fredholm operator on $Im F^{n}.$ By \cite[Proposition 7]{IS5} if $Im F^{n}\cong H_{\mathcal{A}},$ then $index F$ is well defined. However, by applying Lemma  \ref{11r l01} in the proof of \cite[Proposition 7]{IS5} we can see that $index F$ is also well defined when $F$ is an $\mathcal{A}$-$B$-Fredholm operator on an arbitrary Hilbert $\mathcal{A}$-module $M$ and when $Im F^{n} $ is not isomorphic to $H_{\mathcal{A}} .$ Below we present a generalization of \cite[Theorem 3.2]{BM} in the setting of $C^{*}$-$B$-Fredholm operators. 

\begin{proposition} \label{DP P26}    
	Let $M$ be a Hilbert-module and $F,D \in B^{a}(M) $ satisfying that $FD=DF.$ Suppose that there exists an $n \in \mathbb{N} $ such that $Im (DF)^{m}$ is closed for all $m \geq n $ and in addition for each $m \geq n  $ we have that $Im F^{m+1}D^{m} $ and $Im D^{m+1}F^{m} $ are closed. If $F$ and $D$ are upper (lower) semi-$\mathcal{A}$-$B$-Fredholm, then $DF$ is upper (lower) semi-$\mathcal{A}$-$B$-Fredholm. If $F$ and $D$ are $\mathcal{A}$-$B$-Fredholm, then $DF$ is $\mathcal{A}$-$B$-Fredholm and $\text {\rm index }  DF=\text {\rm index }  D+\text {\rm index }  F.$ 
\end{proposition}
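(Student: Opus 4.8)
The plan is to reduce the statement to the restriction of the operators to a common "core" submodule on which all the $B$-Fredholm structure becomes genuine $\mathcal{A}$-Fredholm structure, and then invoke the composition result for ordinary semi-$C^{*}$-Fredholm operators. The crucial observation powering the argument is the commutativity $FD=DF$, which gives $(DF)^{m}=D^{m}F^{m}$ for every $m$, so that the iterated images of $DF$ can be analyzed through the iterated images of $F$ and $D$ separately. I would first fix $n$ large enough that all three of $Im(DF)^{m}$, $Im F^{m+1}D^{m}$, $Im D^{m+1}F^{m}$ are closed for $m\geq n$, and simultaneously large enough that $F_{\mid_{Im F^{n}}}$ and $D_{\mid_{Im D^{n}}}$ are semi-$\mathcal{A}$-Fredholm (enlarging $n$ beyond the two separate thresholds for $F$ and $D$ is harmless). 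Since $F$ and $D$ commute, $F$ maps $Im F^{n}$ into itself and likewise $D$ preserves $Im D^{n}$, and one checks that the relevant iterates of $DF$ live on $Im(DF)^{n}=Im D^{n}F^{n}$.

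The heart of the matter is to show that $(DF)_{\mid_{Im(DF)^{n}}}$ is semi-$\mathcal{A}$-Fredholm of the correct type. Here I would factor
$$
(DF)_{\mid_{Im(DF)^{n}}} = D_{\mid_{\cdot}}\circ F_{\mid_{\cdot}},
$$
realizing the restriction as a composition of (suitable restrictions of) $F$ and $D$ to closed submodules, and then apply the known composition theorem for semi-$C^{*}$-Fredholm operators from \cite{IS1} together with the closed-range hypotheses that guarantee the intermediate images $Im F^{n+1}D^{n}$ and $Im D^{n+1}F^{n}$ are closed. The closedness of these mixed images is exactly what is needed to ensure that the composition is taken between operators whose ranges split off complemented finitely generated pieces, so that the matrix decompositions of Definition \ref{D D05} can be assembled for the composite. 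For the upper (respectively lower) semi-$\mathcal{A}$-Fredholm case, one tracks only the finitely generated kernel part (respectively cokernel part), using that a composition of operators with finitely generated kernels has finitely generated kernel, and dually for cokernels.

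For the index statement in the $\mathcal{A}$-Fredholm case, I would use the well-definedness of the index established in Lemma \ref{11r l01}, together with the additivity of the index under composition for ordinary $\mathcal{A}$-Fredholm operators (the $K_{0}(\mathcal{A})$-valued index adds over compositions, which follows from the exactness-type arguments analogous to Remark \ref{06r 02}). Concretely, since $\text{\rm index}\,DF := \text{\rm index}\,(DF)_{\mid_{Im(DF)^{n}}}$ and the analogous definitions hold for $F$ and $D$ on their own stable images, I would show that on the common stable submodule the factorization $(DF)_{\mid} = D_{\mid}F_{\mid}$ yields
$$
\text{\rm index}\,DF = \text{\rm index}\,D + \text{\rm index}\,F
$$
in $K_{0}(\mathcal{A})$ by additivity of the ordinary $\mathcal{A}$-Fredholm index.

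I expect the main obstacle to be the bookkeeping that identifies the restricted composition $(DF)_{\mid_{Im(DF)^{n}}}$ cleanly as a composition of $\mathcal{A}$-Fredholm operators on matching submodules. The commutativity is what makes $Im(DF)^{n}=Im D^{n}F^{n}$ and keeps all the iterated ranges nested compatibly, but verifying that $F$ actually restricts to an $\mathcal{A}$-Fredholm operator \emph{from} the stable image of $DF$ \emph{into} a submodule on which $D$ is then $\mathcal{A}$-Fredholm — and that the mixed closed-range hypotheses $Im F^{n+1}D^{n}$, $Im D^{n+1}F^{n}$ closed are precisely what guarantee these intermediate spaces are closed and complemented — is the delicate part. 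Unlike the Banach-space setting of \cite{BM}, one cannot take complements for granted, so each splitting must be justified via \cite[Theorem 2.3.3]{MT}, which requires the relevant ranges to be closed; this is exactly why the hypotheses demand closedness of the mixed iterates rather than closedness of $Im(DF)^{m}$ alone.
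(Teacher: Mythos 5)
Your overall strategy -- restrict everything to the stable core $Im(DF)^{n}$, show that the restrictions of $F$ and $D$ to this submodule are semi-$\mathcal{A}$-Fredholm of the right type, and then conclude by a composition/exact-sequence argument, with the index handled by additivity -- is exactly the route the paper takes (the paper uses the kernel--cokernel exact sequence of \cite[Lemma 2]{IS5} where you invoke the composition theorem; these are interchangeable here). However, the proposal stops short at precisely the two points that carry the mathematical content, and naming a step as ``the delicate part'' is not the same as proving it. The first gap is the claim that $F_{\mid_{Im(DF)^{n}}}$ is upper semi-$\mathcal{A}$-Fredholm on $Im(DF)^{n}$: the hypothesis only gives that $\ker F\cap Im F^{n}$ is finitely generated, and a restriction of a semi-Fredholm operator to a smaller complemented submodule is not automatically semi-Fredholm. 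The paper resolves this by noting that the closedness of $Im F^{n+1}D^{n}=Im\,F(DF)^{n}$ and of $Im(DF)^{n}$ makes $F_{\mid_{Im(DF)^{n}}}$ a regular operator (via \cite[Theorem 2.3.3]{MT}), so that $\ker F\cap Im(DF)^{n}$ is orthogonally complementable; since $Im(DF)^{n}\subseteq Im F^{n}$, Lemma \ref{D L18} then exhibits $\ker F\cap Im(DF)^{n}$ as an orthogonal direct summand of the finitely generated module $\ker F\cap Im F^{n}$, hence finitely generated. Without this, the composition theorem you want to apply has no hypotheses to act on.

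The second gap is the lower semi-$B$-Fredholm case, which you dismiss as ``dually for cokernels.'' It is not formally dual: one must show that the orthogonal complement of $Im\,F(DF)^{n}$ inside $Im(DF)^{n}$ is finitely generated, and this does not follow by passing to adjoints. The paper's argument writes $Im F^{n}=Im F^{n+1}\oplus N$ with $N$ finitely generated (from \cite[Lemma 12]{IS5} applied to the lower semi-Fredholm operator $F_{\mid_{Im F^{n}}}$), then uses commutativity to get
$$Im(DF)^{n}=Im\,D^{n}F^{n}=Im\,D^{n}F^{n+1}+D^{n}(N)=Im\,F(DF)^{n}+D^{n}(N),$$
so that the orthogonal complement $\tilde N$ of $Im\,F(DF)^{n}$ in $Im(DF)^{n}$ equals $PD^{n}(N)$ for the corresponding projection $P$ and is therefore finitely generated. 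This is where the hypothesis that $Im F^{m+1}D^{m}$ and $Im D^{m+1}F^{m}$ are closed is actually consumed, and it is the step your proposal would need to supply to be complete.
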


\begin{proof}
	If $F$ and $D$ are upper semi-$\mathcal{A}$-$B$-Fredholm, then by \cite[Proposition 7]{IS5}  we can choose an $n \in \mathbb{N} $ sufficiently large such that $n$ satisfies the assumption in the proposition and in addition satisfies that $Im D^{m}, Im F^{m} $ are closed and $F_{\mid_{ImF^{m}}}, D_{\mid_{ImF^{m}}} $ are upper semi-$\mathcal{A}$-Fredholm for all $m \geq n  .$ As $Im F^{n+1}D^{n}=Im F(DF)^{n} ,$  $Im D^{n+1} F^{n}=Im D(DF)^{n} ,$ $Im (DF)^{n}$ and $Im (DF)^{n+1}$  are all closed by assumption, we have that $F_{\mid_{Im(DF)^{n}}}, D_{\mid_{Im(DF)^{n}}} $ and $DF_{\mid_{Im(DF)^{n}}} $ are regular operators. This follows from \cite[Theorem 2.3.3]{MT}. Hence we can apply the exact sequence from \cite[Lemma 2]{IS5}. Since $F_{\mid_{ImF^{n}}} $ and $D_{\mid_{ImD^{n}}} $ are upper semi-$\mathcal{A}$-Fredholm, we have that 
	$$\ker F_{\mid_{ImF^{n}}} = \ker F \cap Im F^{n} \text{ and } \ker D_{\mid_{ImD^{n}}} = \ker D \cap Im D^{n} $$  
	are both finitely generated by \cite[Lemma 12]{IS5}. As $F_{\mid_{Im(DF)^{n}}} $ and  $ D_{\mid_{Im(DF)^{n}}} $ are regular operators, it follows that 
	$$\ker F_{\mid_{Im(DF)^{n}}} = \ker F \cap  Im (DF)^{n} \text{ and } \ker D_{\mid_{Im(DF)^{n}}} = \ker D \cap Im(DF)^{n} $$ 
	are both orthogonally complementable in $Im(DF)^{n}.$ However, $Im(DF)^{n} $ is orthogonally complementable in $M$ by \cite[Theorem 2.3.3]{MT}, so $\ker F \cap Im(DF)^{n} $ and $\ker D \cap Im(DF)^{n} $ are orthogonally complementable in $M.$ Since $$Im(DF)^{n}=Im D^{n} F^{n} =Im F^{n} D^{n} \subseteq Im F^{n} \cap Im D^{n} ,$$  
	we get that $$\ker D \cap Im (DF)^{n} \subseteq \ker D \cap Im D^{n} \text{ and } \ker F \cap Im (DF)^{n} \subseteq \ker F \cap Im F^{n} .$$ By Lemma \ref{D L18}  we obtain that	$\ker F \cap Im (DF)^{n} $ and $\ker D \cap Im (DF)^{n}  $ are orthogonally complementable in $\ker F \cap Im F^{n} $ and $ \ker D \cap Im D^{n}  ,$ respectively. As $\ker F \cap Im F^{n} $ and $\ker D \cap Im D^{n}  $ are finitely generated, it follows that $\ker F \cap Im (DF)^{n} \text{ and } \ker D \cap Im (DF)^{n}  $ are both finitely generated. By applying the exact sequence from \cite[Lemma 2]{IS5} on the operators $F_{\mid_{Im(DF)^{n}}} , D_{\mid_{Im(DF)^{n}}}  $ and $DF_{\mid_{Im(DF)^{n}}}  $ we deduce that $\ker DF_{\mid_{Im(DF)^{n}}} $ is finitely generated. Hence, $DF_{\mid_{Im(DF)^{n}}} $ is upper semi-$\mathcal{A}$-Fredholm by \cite[Lemma 12]{IS5}. Proceeding inductively we obtain that $DF_{\mid_{Im(DF)^{m}}} $ is upper semi-$\mathcal{A}$-Fredholm for all $m \geq n.$
	
	Suppose next that $F_{\mid_{Im F^{n}}} $ and $D_{\mid_{Im D^{n}}} $ are lower semi-$\mathcal{A}$-Fredholm. Then, by \cite[Lemma 12]{IS5}, $$Im F^{n}=Im F^{n+1} \oplus N  \text{ and } Im D^{n}=Im D^{n+1} \oplus N^{\prime} $$ 
	for some finitely generated Hilbert submodules $N$ and $N^{\prime}$. It follows that 
	$$Im D^{n}F^{n}=Im D^{n}F^{n+1} +D^{n}(N) \text{ and } Im F^{n}D^{n}=Im F^{n}D^{n+1}+F^{n}(N^{\prime}).$$  
	
	Since $Im F^{n+1} D^{n}=Im F(DF)^{n} $  and $Im D^{n+1}F^{n}=Im D(DF)^{n} $ are both closed by assumption, by \cite[Theorem 2.3.3]{MT}  we have that $Im F^{n+1}D^{n}$ and $Im D^{n+1}F^{n} $ are orthogonally complementable in 
	$$ImF^{n}D^{n}=ImD^{n}F^{n}=Im(DF)^{n},$$ 
	so 
	$$Im(DF)^{n}=Im F(DF)^{n} \oplus \tilde N \text{ and } Im(DF)^{n}=Im D(DF)^{n} \oplus \tilde N^{\prime} $$ 
	for some Hilbert submodules $\tilde N $ and $\tilde N^{\prime} .$ 
	Let $P$ and $P^{\prime} $ stand for the orthogonal projections onto $\tilde N $ and $\tilde N^{\prime},$ respectively. As $Im F^{n+1} D^{n}=Im D^{n}F^{n+1}$ and  $Im D^{n+1}F^{n}=Im F^{n}D^{n+1},$ it follows that $\tilde N = PD^{n}(N) $ and $\tilde N^{\prime} = P^{\prime}F^{n}(N^{\prime}) ,$ hence $ \tilde N  $ and $\tilde N^{\prime} $ are finitely generated since $N$ and $N^{\prime} $ are so. Thus, the orthogonal complement of $Im F(DF)^{n}  $ and the orthogonal complement of $Im D(DF)^{n} $ in $Im (DF)^{n} $ are both finitely generated. By applying again the exact sequence from \cite[Lemma 2]{IS5} on the operators $F_{\mid_{Im(DF)^{n}}} , D_{\mid_{Im(DF)^{n}}} $ and $DF_{\mid_{Im(DF)^{n}}},$ we obtain by \cite[Lemma 12]{IS5} that $DF_{\mid_{Im(DF)^{n}}} $ is lower semi-$\mathcal{A}$-Fredholm. Proceeding inductively we obtain  that $DF_{\mid_{Im(DF)^{m}}}  $ is lower semi-$\mathcal{A}$-Fredholm for all $m \geq n.$
	
	The proof in the case when $F$ and $D$ are $\mathcal{A}$-$B$-Fredholm is similar, or more precisely, a combination of the previous proofs for the cases when $D$ and $F$ were upper or lower semi-$\mathcal{A}$-$B$-Fredholm. Moreover, by applying the exact sequence from  \cite[Lemma 2]{IS5} in this case, we can also deduce that 
	$$\text {\rm index }  DF= \text {\rm index }  D+\text {\rm index }  F.$$
\end{proof}

In Proposition \ref{DP P26} we have considered various compositions of closed range $C^{*}$-operators under the additional assumption that these compositions also have closed image. The natural question which arises is what are the necessary and sufficient conditions for a composition of two closed range $C^{*}$-operators to have closed image. To answer this question, we give first the following lemma.

\begin{lemma} \label{D L22}     
	Let $M$ and $N$ be two closed submodules of a Hilbert $ C^{*} $-module $ \widetilde{M} $ over a $ C^{*} $-algebra $ \mathcal{A}. $ Suppose that $M$ is orthogonally complementable in $ \widetilde{M} $ and that $M \cap N =\lbrace 0 \rbrace .$ Then $M+N$ is closed if and only if $P_{\mid_{N}} $ is bounded below, where $P$ denotes the orthogonal projection onto $M^{\perp}.$ 
\end{lemma}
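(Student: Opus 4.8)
The plan is to prove the two implications separately, treating $\widetilde{M}$ and its closed submodules as Banach spaces (so that completeness and the Banach open mapping theorem are available), and exploiting that, since $M$ is orthogonally complementable, $I-P$ is the orthogonal projection onto $M=\ker P$.

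First I would dispatch the easy direction: assume $P_{\mid_{N}}$ is bounded below, say $\|P(n)\|\ge c\|n\|$ for all $n\in N$ with some $c>0$, and show $M+N$ is closed. I take a sequence $x_{k}=m_{k}+n_{k}$ with $m_{k}\in M$, $n_{k}\in N$, converging to some $x\in\widetilde{M}$. Applying $P$ and using $P(m_{k})=0$ gives $P(x_{k})=P(n_{k})$; since $(x_{k})$ is Cauchy and $P$ is continuous, $(P(n_{k}))$ is Cauchy, and the lower bound forces $\|n_{k}-n_{j}\|\le c^{-1}\|P(n_{k})-P(n_{j})\|\to 0$, so $(n_{k})$ is Cauchy. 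As $N$ is closed it converges to some $n\in N$, whence $m_{k}=x_{k}-n_{k}\to x-n$, which lies in $M$ because $M$ is closed, giving $x=(x-n)+n\in M+N$.

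For the converse, which I expect to be the main obstacle, I would assume $M+N$ is closed and produce the lower bound via the open mapping theorem, the crux being to identify the range of $P_{\mid_{N}}$ exactly. First, $P_{\mid_{N}}$ is injective: if $P(n)=0$ then $n\in\ker P=M$, so $n\in M\cap N=\{0\}$. Next I set $N_{0}:=M^{\perp}\cap(M+N)$, which is closed (intersection of the closed submodule $M^{\perp}$ with $M+N$, the latter now assumed closed) and hence a Banach space, and I claim $P(N)=N_{0}$. The inclusion $P(N)\subseteq N_{0}$ holds because for $n\in N$ we have $P(n)=n-(I-P)(n)$ with $n\in N$ and $(I-P)(n)\in M$, so $P(n)\in M+N$, while $P(n)\in M^{\perp}$ by definition of $P$; thus $P(n)\in M^{\perp}\cap(M+N)=N_{0}$. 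Conversely, any $y\in N_{0}$ can be written $y=m+n$ with $m\in M$, $n\in N$, and since $y\in M^{\perp}$ we get $y=P(y)=P(m)+P(n)=P(n)\in P(N)$.

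Finally I would invoke the Banach open mapping theorem: $P_{\mid_{N}}\colon N\to N_{0}$ is a bounded linear bijection between Banach spaces, hence has a bounded inverse, which is exactly the assertion that $P_{\mid_{N}}$ is bounded below, namely $\|n\|\le\|(P_{\mid_{N}})^{-1}\|\,\|P(n)\|$ for all $n\in N$. (Lemma \ref{D L18} applied with $M_{1}=M$, $M_{2}=M+N$, $M_{1}^{\prime}=M^{\perp}$ furnishes the decomposition $M+N=M\,\tilde{\oplus}\,N_{0}$, giving an alternative route to both the surjectivity of $P_{\mid_{N}}$ onto $N_{0}$ and the completeness of $N_{0}$.) The delicate point throughout is tracking which submodules are guaranteed complete: the forward direction uses only that $M$ and $N$ are closed, whereas the converse crucially uses the hypothesis that $M+N$ is closed to make $N_{0}$ a Banach space on which the open mapping theorem applies.
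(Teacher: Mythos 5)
Your proof is correct. The converse direction is essentially the paper's own argument: identify $P(N)$ with $M^{\perp}\cap(M+N)$ (the paper gets this via Lemma \ref{D L18}, writing $M+N=M\oplus M^{\prime}$ with $M^{\prime}=M^{\perp}\cap(M+N)$ and noting $P(M+N)=P(N)=M^{\prime}$), observe injectivity from $M\cap N=\{0\}$, and invoke the Banach open mapping theorem. Where you genuinely diverge is the forward direction. The paper follows the scheme of \cite[Lemma 3.2]{IS1}: it decomposes $y=y^{\prime}+y^{\prime\prime}$ with $y^{\prime}\in M$, $y^{\prime\prime}\in M^{\perp}$, uses the $C^{*}$-inner-product identity $\langle y,y\rangle=\langle y^{\prime},y^{\prime}\rangle+\langle y^{\prime\prime},y^{\prime\prime}\rangle$ and a supremum over states to get $\|y\|\geq\max\{\|y^{\prime}\|,\|y^{\prime\prime}\|\}$, and from $\|y^{\prime\prime}\|\geq\delta\|y\|$ derives a uniform bound $\|x\|\leq 1+\delta^{-1}$ whenever $x\in M$, $y\in N$, $\|x+y\|\leq 1$; closedness of $M+N$ then follows because the skew projection of $M\tilde{\oplus}N$ onto $M$ is bounded. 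Your sequential argument (apply $P$ to a convergent sequence $m_{k}+n_{k}$, use the lower bound to make $(n_{k})$ Cauchy, then recover the limit in $M+N$) reaches the same conclusion more directly, avoids the module-specific norm computation entirely, and is self-contained rather than leaning on an external lemma; the paper's quantitative route has the mild advantage of exhibiting an explicit bound for the projection onto $M$ along $N$, which is sometimes useful elsewhere. Both arguments are valid.
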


\begin{proof}
	Suppose first that $P_{\mid_{N}}$ is bounded below and let $ \delta=m (P_{\mid_{N}}).$ Then $\delta > 0.$ As in the proof of \cite[Lemma 3.2]{IS1} we wish to argue that in this case, there exists a constant $C>0$ such that if $x \in M $ and $y \in N $ satisfy $ \parallel x+y \parallel \leq 1 ,$ then $\parallel x \parallel \leq C .$ Now, since $M$ is orthogonally complementable, given $y \in N,$ we may write $y$ as $y=y^{\prime}+y^{\prime \prime} ,$ where $y^{\prime} \in M, y^{\prime \prime} \in M^{\perp}.$ Observe that $\langle y,y \rangle=$ $\langle y^{\prime},y^{\prime}\rangle +\langle y^{\prime \prime},y^{\prime \prime} \rangle .$ By taking the supremum over all states on $\mathcal{A} $ we obtain that $\parallel y \parallel \geq \max \lbrace \parallel  y^{\prime} \parallel , \parallel y^{\prime \prime} \parallel \rbrace .$  Hence $\parallel y^{\prime \prime} \parallel=\parallel P_{\mid_{N}} (y) \parallel \geq \delta \parallel y \parallel \geq \delta \parallel y^{\prime} \parallel .$ Then, by the same arguments as in the proof of  \cite[Lemma 3.2]{IS1}, we obtain that if  $\parallel x+y \parallel \leq 1$ and $x \in M,$ then $\parallel x \parallel \leq 1+\dfrac{1}{\delta}=\dfrac{\delta+1}{\delta} .$ It follows that $M+N$ is closed. 
	
	Conversely, if $M+N$ is closed, then, by Lemma \ref{D L18}, $M+N=M \oplus M^{\prime},$ where $M^{\prime}=M^{\perp} \cap (M+N) .$ Hence $P(M+N)=M^{\prime} ,$ which is closed. However, $P(M+N)=P(N) .$ Moreover, since $M \cap N=\lbrace 0 \rbrace ,$  we have that  $P_{\mid_{N}} $ is injective. By the Banach open mapping theorem it follows that $P_{\mid_{N}} $ is an isomorphism onto $M^{\prime} ,$ hence $P_{\mid_{N}} $ is bounded below. 
\end{proof}

Finally we are ready to give the conditions that are both necessary and sufficient for a composition of two closed range operators to have closed image.

\begin{corollary} \label{06c 08}   
	Let $ \widetilde{M} $ be a Hilbert $ C^{*} $-module, $F,D \in B^{a}(\widetilde{M} ) $ and suppose that $Im F, Im D$ are closed. Then $Im DF$ is closed if and only if $\ker D \cap Im F $ is orthogonally complementable and $P_{\mid_{Im F \cap (\ker D \cap Im F)^{\perp}}} $ is bounded below, (or, equivalently,  $Q_{\mid_{\ker D \cap (\ker D \cap Im F)^{\perp}}} $ is bounded below), where $P$ and $Q$ denote the orthogonal projections onto $\ker D^{\perp}  $ and $Im F^{\perp} ,$ respectively.
\end{corollary}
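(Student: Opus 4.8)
The plan is to reduce the closedness of $Im DF = D(Im F)$ to the bounded-belowness of a single orthogonal projection and then to invoke Lemma~\ref{D L22}. Since $Im F$ and $Im D$ are closed, \cite[Theorem 2.3.3]{MT} supplies the orthogonal decompositions $\widetilde{M} = Im F \oplus Im F^{\perp}$ and $\widetilde{M} = \ker D \oplus \ker D^{\perp}$, and shows that $D_{\mid_{\ker D^{\perp}}}$ is an isomorphism onto $Im D$, hence bounded below, say $\|Dz\| \geq c\|z\|$ for all $z \in \ker D^{\perp}$ with $c>0$. Writing $P$ for the orthogonal projection onto $\ker D^{\perp}$ I will use the identity $Dx = D(Px)$, which yields
$$ c\,\|Px\| \leq \|Dx\| \leq \|D\|\,\|Px\| \qquad (x \in \widetilde{M}). $$
Thus $D$ and $P$ are bounded below on the same submodules, and this is the device that will turn an analytic condition on $D$ into the projection condition in the statement.

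First I would show that, once $Im DF$ is closed, $K := \ker D \cap Im F$ is automatically orthogonally complementable. Because $Im F$ is orthogonally complementable, $D_{\mid_{Im F}}$ is adjointable from $Im F$ into $\widetilde{M}$, with kernel $K$ and range $Im DF$; so if $Im DF$ is closed then \cite[Theorem 2.3.3]{MT} forces $K$ to be orthogonally complementable in $Im F$, hence in $\widetilde{M}$. Granting complementability of $K$, I set $L := Im F \cap K^{\perp}$, so that $Im F = K \oplus L$ and $D_{\mid_{L}}$ is injective with $D(L) = Im DF$. Now $Im DF$ is closed precisely when the injective adjointable operator $D_{\mid_{L}}$ is bounded below (bounded below always gives closed range, and for an injective operator with closed range the Banach open mapping theorem gives the converse), and by the displayed inequalities this happens exactly when $P_{\mid_{L}}$ is bounded below. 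This settles both implications. The same step can be phrased through Lemma~\ref{D L22}: as $\ker D$ is orthogonally complementable and $\ker D \cap L = \{0\}$, Lemma~\ref{D L22} gives that $\ker D + L$ is closed iff $P_{\mid_{L}}$ is bounded below, and $\ker D + L = \ker D + Im F$.

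The alternative formulation with $Q$ follows from the symmetric roles of $\ker D$ and $Im F$. Setting $L' := \ker D \cap K^{\perp}$ (the orthogonal complement of $K$ inside $\ker D$, again available since $K$ is complementable), Lemma~\ref{D L22} applied with the complementable submodule $Im F$ and with $L'$, noting $Im F \cap L' = \{0\}$, shows that $Im F + L'$ is closed iff $Q_{\mid_{L'}}$ is bounded below. Since $Im F + L' = Im F + \ker D = \ker D + L$, the two projection conditions are each equivalent to the closedness of $\ker D + Im F$, and hence to one another. I expect the genuine obstacle to be the necessity of complementability of $K$: in contrast to the Hilbert space situation it is not automatic, and the cleanest route is to read it off from \cite[Theorem 2.3.3]{MT} applied to $D_{\mid_{Im F}}$ rather than to try to extract it from closedness of $\ker D + Im F$ directly. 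Some care will also be needed to check that $L$ and $L'$ really are orthogonal complements of $K$ inside $Im F$ and $\ker D$, which is exactly where complementability of $K$ enters twice.
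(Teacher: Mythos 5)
Your argument is correct, but it reaches the main equivalence by a genuinely different route than the paper. The paper's proof pivots on Nikaido's result \cite[Corollary 1]{N}, which converts ``$Im\, DF$ is closed'' into ``$\ker D + Im\, F$ is closed'', and then applies Lemma \ref{D L22} to the decompositions $\ker D = K \oplus (\ker D \cap K^{\perp})$ and $Im\, F = K \oplus (Im\, F \cap K^{\perp})$ obtained from Lemma \ref{D L18}; the converse direction extracts complementability of $K = \ker D \cap Im\, F$ from adjointability of $D_{\mid_{Im\, F}}$ exactly as you do. You instead bypass Nikaido entirely: the two-sided estimate $c\|Px\| \leq \|Dx\| \leq \|D\|\,\|Px\|$ (valid because $D_{\mid_{\ker D^{\perp}}}$ is bounded below by the open mapping theorem, $Im\, D$ being closed) lets you transfer bounded-belowness between $D_{\mid_{L}}$ and $P_{\mid_{L}}$ for $L = Im\, F \cap K^{\perp}$, and since $D(L) = Im\, DF$ with $D_{\mid_{L}}$ injective, closedness of $Im\, DF$ is equivalent to $D_{\mid_{L}}$ being bounded below. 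This makes the main equivalence self-contained and arguably more elementary, at the cost of not exhibiting the closedness of $\ker D + Im\, F$ as the intermediate invariant --- which is the quantity the paper cares about for the Dixmier-angle interpretation in the subsequent remark. Your handling of the $P$-versus-$Q$ equivalence, via two applications of Lemma \ref{D L22} showing each condition equivalent to closedness of $\ker D + Im\, F = \ker D + L = Im\, F + L'$, recovers precisely the paper's mechanism, so the two proofs reconnect there. The only point worth polishing is the degenerate case $D = 0$ in the norm comparison, which is immediate.
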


\begin{proof}
	If $\ker D \cap Im F $ is orthogonally complementable, then from Lemma \ref{D L18} it follows that $$\ker D = (\ker D \cap Im F) \oplus (\ker D \cap ( \ker D \cap Im F)^{\perp})$$ and $$Im F =(\ker D \cap Im F) \oplus (Im F \cap (\ker D \cap Im F)^{\perp})  .$$  
	Hence $$\ker D + Im F = \ker D + (Im F \cap (\ker D \cap Im F)^{\perp})$$ 
	$$= Im F + (\ker D \cap (\ker D \cap Im F)^{\perp})  . $$   If in addition $P_{Im F \cap (\ker D \cap Im F)^{\perp}} $ or $Q_{\mid_{\ker D \cap (\ker D \cap Im F)^{\perp}}} $ is bounded below, from Lemma \ref{D L22} ( as both $\ker D$ and $Im F$ are orthogonally complementable by \cite[Theorem 2.3.3]{MT} ) we deduce that $\ker D + Im F$   is closed. Then, from \cite[Corollary 1]{N} it follows that $Im DF$ is closed. \\
	
	Conversely, if $Im DF$ is closed, then $D_{\mid_{Im F}} $ is an adjointable operator with closed image. Indeed, since $Im F$ is closed, by  \cite[Theorem 2.3.3]{MT} $Im F$ is orthogonally complementable, hence $D_{\mid_{Im F}} $ is adjointable. From  \cite[Theorem 2.3.3]{MT} it follows that $ \ker D_{\mid_{Im F}} $ is orthogonally complementable in $Im F.$  However, $ \ker D_{\mid_{Im F}}=\ker D \cap Im F  .$ Since $Im F$ is orthogonally complementable in $ \widetilde{M} $ and $ \ker D \cap Im F \subseteq Im F,$  we get that $\ker D \cap Im F$ is orthogonally complementable in $ \widetilde{M}. $ Moreover, $\ker D + Im F $ is closed by \cite[Corollary 1]{N} since $Im DF$ is closed. By the previous arguments we have that
	$$\ker D = (\ker D \cap Im F ) \oplus (\ker D \cap (\ker D \cap Im F)^{\perp}),$$ 
	$$ Im F = (\ker D \cap Im F) \oplus (Im F \cap (\ker D \cap Im F)^{\perp})   ,$$ 
	so we are then in the position to apply Lemma \ref{D L22} which gives us the implication in the opposite direction.
\end{proof}

\begin{remark} \label{06r 09}  
	If $H$ is a Hilbert space and $M, N$ are closed subspaces of $H$ such that $M \cap N = \lbrace 0 \rbrace ,$ it is not hard to see that if $P$ denotes the orthogonal projection onto $M^{\perp} ,$ then $ P_{\mid_{N}}$ is bounded below if and only if the Dixmier angle between $M$ and $N$ is positive. Thus, Corollary \ref{06c 08}  is a proper generalization of Bouldin's result in \cite{Bld}. Indeed, since $ H $ is a Hilbert space, for each $ y\in N $ we have that  $ \parallel y \parallel ^{2}= \parallel P_{\mid_{N}}y \parallel ^{2}+ \parallel (I-P_{\mid_{N}})y \parallel ^{2}   .$ 
	So, $  \parallel (I-P_{\mid_{N}})y \parallel =\sqrt{ \parallel y \parallel ^{2}- \parallel P_{\mid_{N}}y \parallel ^{2} } $ for every $ y\in N,$ in particular $  \parallel (I-P_{\mid_{N}})y \parallel =\sqrt{1- \parallel P_{\mid_{N}}y \parallel ^{2}} $ for every $ y\in N $ with $  \parallel y \parallel =1. $ Next, for each $ y\in N, $ we have 
	$sup \lbrace \mid \langle x,y \rangle \mid \vert \text{ } x\in M \text{ and } \parallel x \parallel \leq 1 \rbrace=  \parallel(I-P_{\mid_{N}})y \parallel. $
	
	This is because $ \mid \langle x,y \rangle \mid=\mid \langle x,(I-P_{\mid_{N}})y \rangle  \mid \leq  \parallel (I-P_{\mid_{N}})y \parallel  $ when $ x\in M $ with  $ \parallel x \parallel \leq1, $ and, on the other hand, $ \vert \langle y^{\prime},y \rangle \vert = \parallel (I-P_{\mid_{N}})(y) \parallel , $ where  
	
	\begin{eqnarray*}
		y^{\prime}=
		\left\{
		\begin{array}{cc}
			\frac{(I-P_{\mid_{N}})y}{\parallel (I-P_{\mid_{N}})y \parallel}\, & \text{if } (I-P_{\mid_{N}})y \neq 0, \\ \\
			0 & \text{if } (I-P_{\mid_{N}})y = 0.\\ \\
		\end{array}
		\right.
	\end{eqnarray*}
	
	Thus, $  \parallel y^{\prime} \parallel \leq 1 $ and $ y^{\prime}\in M.  $ Therefore,  $$\text{sup }\lbrace  {\mid \langle x,y \rangle \mid \vert \text{ } x\in M,   \parallel y^{\prime} \parallel  \leq 1}\rbrace = \parallel (I-P_{\mid_{N}})y \parallel  $$ 
	for every $y\in N.$ Combining all this together, we deduce that 
	$$ c_{0}(M,N)= \text {sup } \lbrace \sqrt{1-\parallel P_{\mid_{N}} y \parallel^{2}   } \mid y \in N, \parallel y \parallel =1 \rbrace ,$$ hence $c_{0}(M,N) <1  $ if and only if $P_{\mid_{N}}    $ is bounced below.
	
\end{remark}

Now we give some examples of $\mathcal{A}$-Fredholm operators with non-closed image.

\begin{example} \label{06e 17}     
	Let $ \mathcal{A}=L^{\infty}((0,1), \mu) $ and consider the operator $F: \mathcal{A} \rightarrow \mathcal{A}  $ given by $F(f)=f \cdot id$ (where $id(x)=x  $ for all $x \in (0,1)  $). Then $F$ is an $\mathcal{A}$-linear, bounded operator on $\mathcal{A}$ and, since $\mathcal{A}$ is finitely generated considered as Hilbert $\mathcal{A}$-module over itself, it follows that $F$ is $\mathcal{A}$-Fredholm.  However, $Im F$ is not closed. Indeed, $ \parallel F (\mathcal{X}_{(0,\frac{1}{n})}) \parallel_{\infty}=\frac{1}{n} $ for all $n$ whereas $\parallel (\mathcal{X}_{(0,\frac{1}{n})}) \parallel_{\infty}=1  $ for all $n,$ so $F$ is not bounded below.\\
	Consider now the operator $\tilde{F} \in  B^{a}(H_{\mathcal{A}})  $ given by $\tilde{F} =Q+JFP,$ where $Q$ denotes the orthogonal projection onto $L_{1}^{\perp},P=I-Q  $ and $J(\alpha)=(\alpha,0,0,0, \dots)  $ for all $ \alpha \in \mathcal{A} .$ Then it is easy to see that $\tilde{F} \in  \mathcal{M}\Phi (H_{\mathcal{A}})  $ and $Im \tilde{F}  $ is not closed. 
\end{example}

\begin{example} \label{06e 19}    
	Let $\mathcal{A}= B(H) $ where $H$ is a Hilbert space. Choose an $S \in B(H)  $ such that $Im S$ is not closed. Then $S$ is not bounded below, so there exists a sequence of unit vectors $ \lbrace x_{n } \rbrace_{n \in \mathbb{N} }  $ in $H$ such that $\parallel Sx_{n}\parallel \rightarrow 0$ as $ n \rightarrow \infty .$ Choose an $x \in H $ such that $\parallel x \parallel=1 $ and define the operators $ B_{n} \in B(H)$ to be given as $B_{n}x =x_{n}$ and ${B_{n}}_{\mid Span \lbrace x \rbrace^{\perp} } =0$ for all $n.$ Then we have that $\parallel B_{n} \parallel=\parallel B_{n}x \parallel=\parallel x_{n} \parallel = 1 $ for all $n.$ However, since $S{B_{n}}_{\mid Span \lbrace x \rbrace^{\perp} }=0 $ for all $n$ and $\parallel x \parallel = 1 ,$ it follows that    $\parallel SB_{n}\parallel= \parallel SB_{n}x \parallel=\parallel Sx_{n}\parallel $ for all $n.$ Thus, $\parallel SB_{n}\parallel \rightarrow 0 $ as $n \rightarrow \infty .$ If we consider the operator $F: \mathcal{A} \rightarrow \mathcal{A}  $ given by $F(T) = ST$ for all $T \in B(H) ,$ then $F$ is an $\mathcal{A}$-linear, bounded operator on $\mathcal{A}$ (when $\mathcal{A}$ is viewed as a Hilbert $\mathcal{A}$-module over itself), but $Im F$ is not closed. This also follows from \cite[Theorem 7]{JS}. Using the operator $F,$ it is easy to construct an operator $\tilde{F} \in  \mathcal{M}\Phi (H_{\mathcal{A}})  $ in the same way as in the previous example such that $Im \tilde{F}  $ is not closed.
\end{example}

Notice that if $ S\in B(H) $ is such that $ ImS $ is closed , but $ ImS^{2} $ is not closed, then $ Im\tilde{F} $ will be closed , whereas $ Im \tilde{F}^{2} $ will \underline{not} be closed. Now we will give another example of an $ \mathcal{A}$-Fredholm operator $F$ with the property that $ ImF $ is closed, but $ ImF^{2} $ is not closed.

\begin{example} \label{06e 18}    
	Let $H$ be an infinite-dimensional Hilbert space, $M$ and $N$ be closed, infinite-dimensional subspaces of $H$  such that  $M+N$ is not closed. Denote by $p$ and $q$ the orthogonal projections onto $M$ and $N,$ respectively. If we let $\mathcal{A}=B(H) ,$  then $\tilde{M}=Span_{\mathcal{A}} \lbrace (p,0,0,0,\dots) \rbrace $ and $\tilde{N}=Span_{\mathcal{A}} \lbrace (q,0,0,0,\dots) \rbrace $ are finitely generated Hilbert submodules of $H_{\mathcal{A}} .$ Moreover, $\tilde{M}+\tilde{N} $ is not closed. Indeed, since $M+N$ is not closed, there exists a sequence $\lbrace x_{n}+y_{n} \rbrace $ in $H$ such that $ x_{n} \in M, y_{n} \in N $ for all $n$ and $x_{n}+y_{n} \rightarrow z  $ for some $z \in H \setminus (M+N) .$ Choose an $x \in H $ such that $\parallel x \parallel=1 $ and let, for each $n,$ $T_{n} $ and $S_{n} $ be the operators in $B(H)$  defined by $T_{n}x=x_{n},$ $S_{n}x=y_{n}$ and ${T_{n}}_{\mid Span \lbrace x \rbrace^{\perp} }={S_{n}}_{\mid Span \lbrace x \rbrace^{\perp} }=0 .$ Since $x_{n} \in M $ and $y_{n} \in N $ for all $n,$ it follows that $T_{n} \in p\mathcal{A}  $ and $S_{n} \in q\mathcal{A} $ for all $n.$ Moreover, $\parallel S_{n}+T_{n}-S_{m}-T_{m} \parallel =\parallel (S_{n}+T_{n}-S_{m}-T_{m})x \parallel  $ for all $m, n.$ Since $(S_{n}+T_{n})x=x_{n}+y_{n} $ for all $n,$ it follows that $\lbrace S_{n}+T_{n} \rbrace_{n}  $ is a Cauchy sequence in $B(H),$ hence $S_{n}+T_{n} \rightarrow T $ for some $T \in  B(H)  .$ Then $x_{n}+y_{n}=S_{n}x+T_{n}x \rightarrow Tx=z$ as $n \rightarrow \infty.$ Now, $S_{n}+T_{n} \in p\mathcal{A}+q\mathcal{A} $ for all $n.$ If also $T \in p\mathcal{A}+q\mathcal{A} ,$ then $Tx \in M+N .$ However, then $z \in M+N ,$ which is a contradiction. Thus, $T \notin  p\mathcal{A}+q\mathcal{A} ,$ so $p\mathcal{A}+q\mathcal{A} $ is not closed in $\mathcal{A}.$ It follows easily that $\tilde{M}+\tilde{N} $ is not closed. Also, $(L_{1}^{\perp} \oplus \tilde{M})+\tilde{N} $ is not closed. Since $\tilde{N} $ is finitely generated, by the Dupre-Filmore  Theorem \cite[Theorem 1.4.5]{MT} we have that $\tilde{N}^{\perp} \cong H_{\mathcal{A}} .$ Moreover, $L_{1}^{\perp} \oplus \tilde{M} \cong H_{\mathcal{A}} ,$ hence $L_{1}^{\perp} \oplus \tilde{M} \cong \tilde{N}^{\perp} .$ Let $U:\tilde{N}^{\perp} \rightarrow L_{1}^{\perp} \oplus \tilde M $ be an isomorphism, set $F=JUP ,$ where $P$ is the orthogonal projection onto $\tilde{N}^{\perp} $ and $J$ is the inclusion from $L_{1}^{\perp} \oplus \tilde{M} $ into $H_{\mathcal{A}}. $ Then $\ker F =\tilde{N} $ and $Im F=L_{1}^{\perp} \oplus \tilde{M} ,$ so $F$ is $\mathcal{A}$-Fredholm. Now, since $ImF+\ker F  $ is not closed, it follows from  \cite[Corollary 1]{N}  that $ Im F^{2} $ is not closed.
\end{example}

These examples show that semi-$ \mathcal{A} $-Fredholm operators may behave differently from classical semi-Fredholm operators on Hilbert spaces. Indeed, classical semi-Fredholm operators always have closed image and are therefore regular operators on Hilbert spaces.

\section{Drazin invertible $C^{*}$-operators and \newline $C^{*}$-Browder  operators}

In this section, we consider Drazin invertible $C^{*}$-operators and $C^{*}$-Browder operators as a generalization of Drazin invertible and Browder operators on Hilbert spaces.

Let $M$ be a Hilbert $C^{*}$-module. We recall that an operator $F \in B^{a}(M) $ is said to be Drazin invertible if there exists some $p $ such that $Im F^{k} $ is closed for all $k \geq p $ and $Im F^{k}=Im F^{p}, \text{ } \ker F^{k}=\ker F^{p} $ for all $k \geq p ,$ that is $asc F = dsc F = p.$ In this case $F$ has the matrix 
$\left\lbrack
\begin{array}{cc}
F_{1} & 0 \\
0 &F_{4} \\
\end{array}
\right \rbrack,
$
with respect to the decomposition $M=Im F^{p} \tilde{ \oplus} \ker F^{p} $ where $F_{1} $ is an isomorphism.

First, we give the following lemma.

\begin{lemma} \label{RA L08}
	Let $M$ be a Hilbert $C^{*}-$module and $F \in B^{a}(M) .$ Then $F$ is Drazin invertible if and only if $F^{*}$ is Drazin invertible.
\end{lemma}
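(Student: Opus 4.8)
The plan is to prove the equivalence by establishing that each of the defining conditions for Drazin invertibility is preserved under passing to the adjoint. Recall that $F$ is Drazin invertible precisely when there exists $p$ such that $\operatorname{asc} F = \operatorname{dsc} F = p$, with $\operatorname{Im} F^{k}$ closed for all $k \geq p$. The key observation I would exploit is that for an adjointable operator on a Hilbert $C^{*}$-module, taking adjoints interchanges kernels and orthogonal complements of images: for any $k$ we have $\ker F^{k} = (\operatorname{Im} (F^{*})^{k})^{\perp}$ and, when the range is closed, $\operatorname{Im} F^{k} = (\ker (F^{*})^{k})^{\perp}$. This is the standard relation $\ker T = (\operatorname{Im} T^{*})^{\perp}$ applied to $T = F^{k}$, using that $(F^{k})^{*} = (F^{*})^{k}$.

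First I would record the closed-range symmetry: by \cite[Theorem 2.3.3]{MT}, $\operatorname{Im} F^{k}$ is closed if and only if $\operatorname{Im} (F^{k})^{*} = \operatorname{Im}(F^{*})^{k}$ is closed, since an adjointable operator has closed range exactly when its adjoint does (and in that case both ranges are orthogonally complementable). This immediately transfers the closed-range hypothesis from $F$ to $F^{*}$ and back. Next I would translate the stabilization conditions. The condition $\operatorname{Im} F^{k} = \operatorname{Im} F^{p}$ for all $k \geq p$ is equivalent, upon taking orthogonal complements (valid because these ranges are closed and hence complementable), to $\ker (F^{*})^{k} = \ker (F^{*})^{p}$ for all $k \geq p$, i.e. to $\operatorname{asc} F^{*} = p$. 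Symmetrically, the condition $\ker F^{k} = \ker F^{p}$ for all $k \geq p$ translates via $\ker F^{k} = (\operatorname{Im}(F^{*})^{k})^{\perp}$ and complementation into $\operatorname{Im} (F^{*})^{k} = \operatorname{Im}(F^{*})^{p}$, i.e. $\operatorname{dsc} F^{*} = p$.

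Putting these together, the same integer $p$ that witnesses $\operatorname{asc} F = \operatorname{dsc} F = p$ witnesses $\operatorname{asc} F^{*} = \operatorname{dsc} F^{*} = p$, which is exactly the statement that $F^{*}$ is Drazin invertible; the descent of $F$ becomes the ascent of $F^{*}$ and vice versa, so the common value is preserved. The converse follows by applying the forward direction to $F^{*}$ and using $F^{**} = F$. The main obstacle, and the step requiring care rather than difficulty, is ensuring that each orthogonal-complement manipulation is legitimate: I must verify that all the submodules involved are closed and orthogonally complementable before inverting the relation $(\,\cdot\,)^{\perp}$, which is where \cite[Theorem 2.3.3]{MT} does the real work, guaranteeing that for a closed-range adjointable operator the kernel and image are complementable so that $(K^{\perp})^{\perp} = K$ holds for the relevant submodules. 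Once that is in place the argument is a clean symmetric bookkeeping of the two stabilization conditions.
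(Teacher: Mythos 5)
Your proposal is correct and follows essentially the same route as the paper: both transfer the closed-range condition from $F^{k}$ to $(F^{*})^{k}$ via \cite[Theorem 2.3.3]{MT} and then use the orthogonal decompositions $M=\operatorname{Im}F^{k}\oplus\ker (F^{*})^{k}=\operatorname{Im}(F^{*})^{k}\oplus\ker F^{k}$ to translate the stabilization of images and kernels into the corresponding conditions for $F^{*}$, with the converse obtained from $F=(F^{*})^{*}$.
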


\begin{proof}
	If $F$ is Drazin invertible, then there exists some $p $ such that $Im F^{k}=Im F^{p} $ and $\ker F^{k} = \ker F^{p} $ for all $k \geq p $ and moreover, $Im F^{k} $ is closed for all $k \geq p .$ By the proof of \cite[Theorem 2.3.3]{MT} we get that $Im F^{*k} $ is also closed for all $ k \geq p.$ Since we have by \cite[Theorem 2.3.3]{MT} that $M=Im F^{k} \oplus \ker F^{*k} = Im F^{*k} \oplus \ker F^{k} $ for all $k \geq p ,$ it follows that $\ker F^{*k} = \ker F^{*p} $ and $Im F^{*k} = Im F^{*p} $ for all $k \geq p $ as $ Im F^{k} = Im F^{p}$ and $ \ker F^{k} = \ker F^{p} $ for all $k \geq p .$ Hence $F^{*}$ is Drazin invertible. By applying the same argument on $F = (F^{*})^{*} ,$ we deduce that $F$ is Drazin invertible if $F^{*}$ is Drazin invertible. 
\end{proof}

Now we present the main result in this section.

\begin{proposition} \label{RA L01}
	Let $ F,D \in B^{a}(M) $ such that $FD=DF.$ Assume that $FD$ is Drazin invertible and let $p \in \mathbb{N} $ be such that $Im (FD)^{n}=Im (FD)^{p},$ $\ker (FD)^{n}= \ker (FD)^{p} $ for all $n \geq p .$ Then $F$ is Drazin invertible if and only if there exist some $s,t,k,k^{\prime} \in \mathbb{N}$  such that $p \leq k \leq k^{\prime} ,$ $Im F^{k}$ and $Im F^{k+s}$ are closed, $Im F^{k} \cap \ker D^{p} = Im F^{k+s} \cap \ker D^{p} $ and $Im F^{*k^{\prime}} \cap \ker D^{*p}=Im F^{*k^{\prime}+t} \cap \ker D^{*p} .$ 
\end{proposition}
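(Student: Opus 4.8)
The plan is to pass to the decomposition supplied by the Drazin invertibility of $FD$ and to reduce the whole problem to the restriction of $F$ to the singular part of that decomposition. Since $Im(FD)^{n}=Im(FD)^{p}$ and $\ker(FD)^{n}=\ker(FD)^{p}$ for $n\geq p$, we have $M=R\,\tilde{\oplus}\,K$ with $R=Im(FD)^{p}$ and $K=\ker(FD)^{p}$, and $(FD)|_{R}$ an isomorphism. Because $F$ and $D$ commute with $(FD)^{p}$, both $R$ and $K$ are invariant under $F$ and under $D$, so $F$ and $D$ are block diagonal with respect to $R\,\tilde{\oplus}\,K$. As $(FD)|_{R}=F|_{R}D|_{R}=D|_{R}F|_{R}$ is invertible, $F|_{R}$ is an isomorphism; hence $F^{n}(R)=R$ and $F|_{R}$ is injective. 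This yields, for every $n$, the identities $Im F^{n}=R\,\tilde{\oplus}\,Im(F|_{K})^{n}$ and $\ker F^{n}=\ker(F|_{K})^{n}$. Consequently the ascent, the descent and the eventual closedness of the ranges of $F$ coincide with those of $F|_{K}$, and it suffices to establish that $F$ has finite ascent, finite descent, and $Im F^{n}$ closed for large $n$; taking $P$ to be the maximum of the (finite) ascent and descent then gives the exponent in the module definition of Drazin invertibility, \emph{without} needing to prove that ascent and descent are equal.

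The descent is governed by the first (range) condition. The key identity is
$$Im F^{k}\cap\ker D^{p}=Im(F|_{K})^{k}\qquad(k\geq p).$$
Indeed, since $(F|_{K}D|_{K})^{p}=(FD)^{p}|_{K}=0$, commutativity gives $Im(F|_{K})^{p}\subseteq\ker(D|_{K})^{p}$, so $Im(F|_{K})^{k}\subseteq\ker D^{p}$ for $k\geq p$; moreover $\ker D^{p}\subseteq\ker(FD)^{p}=K$, and $Im F^{k}\cap K=Im(F|_{K})^{k}$ follows from $R\cap K=\{0\}$ together with $Im F^{k}=R\,\tilde{\oplus}\,Im(F|_{K})^{k}$. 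Hence the hypothesis $Im F^{k}\cap\ker D^{p}=Im F^{k+s}\cap\ker D^{p}$ with $s\geq 1$ is equivalent to $Im(F|_{K})^{k}=Im(F|_{K})^{k+s}$; since the decreasing chain $Im(F|_{K})^{k}\supseteq\cdots\supseteq Im(F|_{K})^{k+s}$ then collapses, this says exactly that $F|_{K}$, equivalently $F$, has finite descent, stabilized by $k$.

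The ascent is handled by passing to adjoints. By Lemma \ref{RA L08} applied to $FD$, the operator $(FD)^{*}=F^{*}D^{*}$ is Drazin invertible, and from $\ker(FD)^{*n}=(Im(FD)^{n})^{\perp}$ and $Im(FD)^{*n}=(\ker(FD)^{n})^{\perp}$ (for $n\geq p$) the same exponent $p$ works for $(FD)^{*}$. Since $F^{*}$ and $D^{*}$ commute, the previous paragraph applies verbatim to $F^{*},D^{*}$ and the decomposition $M=Im(FD)^{*p}\,\tilde{\oplus}\,\ker(FD)^{*p}$, giving, with $K^{*}=\ker(FD)^{*p}$,
$$Im F^{*k'}\cap\ker D^{*p}=Im(F^{*}|_{K^{*}})^{k'}\qquad(k'\geq p),$$
so that the second (adjoint) condition is equivalent to finite descent of $F^{*}$. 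Finally, the exact orthogonality relation $\ker F^{n}=(Im F^{*n})^{\perp}$, valid for adjointable operators with no closure required, shows that stabilization of $Im F^{*n}$ forces stabilization of $\ker F^{n}$; thus finite descent of $F^{*}$ yields finite ascent of $F$. Conversely, $F$ Drazin gives $F^{*}$ Drazin, hence $Im F^{*n}$ stabilizes and the adjoint condition can be produced.

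Assembling the pieces: for the ``if'' direction, the two range conditions give finite descent of $F$ (bounded by $k$) and finite ascent of $F$; the hypothesis that $Im F^{k}$ is closed then makes $Im F^{n}$ closed for all $n\geq k$, and with $P$ the maximum of $k$ and the ascent the module definition of Drazin invertibility is met, so $F$ is Drazin invertible. For the ``only if'' direction, Drazin invertibility of $F$ and of $F^{*}$ yields finite ascent and descent with eventually closed ranges; choosing $k\geq p$ past the descent-stabilization point of $F$, then $s=1$, then $k'\geq k$ past the descent-stabilization point of $F^{*}$, then $t=1$, produces the required data with $p\leq k\leq k'$ and $Im F^{k},Im F^{k+s}$ closed. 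The main obstacle is the asymmetry of the hypotheses: closedness is assumed only on the $F$-side, so the ascent must be read off from the adjoint condition without any closed-range information. This is precisely where the exact identity $\ker F^{n}=(Im F^{*n})^{\perp}$ (rather than its closure version) is indispensable, and it is also what allows us to bypass proving the equality of ascent and descent by simply taking $P$ to be their maximum.
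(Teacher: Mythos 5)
Your proof is correct, and it reaches the conclusion by a genuinely different route for the key (range) step. The paper works directly with the identity $Im\, D^{p}F^{k}=Im\, D^{p}F^{k+s}$ and uses the closedness of $Im\, F^{k}$ and $Im\, F^{k+s}$ to manufacture orthogonal decompositions $Im\, F^{k+s}=(\ker D^{p}\cap Im\, F^{k+s})\oplus Y$ and $Im\, F^{k}=Im\, F^{k+s}\oplus Z$ via \cite[Theorem 2.3.3]{MT} and Lemma \ref{D L18}, and then shows $Z=\lbrace 0\rbrace$ by chasing an element $y-z\in\ker D^{p}\cap Im\, F^{k}$. You instead invoke the canonical splitting $M=Im\,(FD)^{p}\,\tilde{\oplus}\,\ker (FD)^{p}$ coming from the Drazin invertibility of $FD$, block-diagonalize $F$ and $D$ with $F$ invertible on the first summand (exactly the structure the paper itself exploits later in the proof of Lemma \ref{r11 l0.5.10}), and observe that $Im\, F^{k}\cap\ker D^{p}=Im\,(F_{\mid_{K}})^{k}$ for $k\geq p$, so the hypothesis is literally the algebraic collapse of the decreasing chain of ranges of $F_{\mid_{K}}$. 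This makes the role of $\ker D^{p}$ transparent and isolates where closedness is really needed: only to certify that the stabilized range is closed and (via Lemma \ref{RA L08}) to set up the adjoint decomposition, whereas the paper consumes the closed-range hypotheses already in constructing $Y$ and $Z$. The price is that you must justify that the splitting is a topological direct sum and that $F$, $D$ preserve both summands; the paper's computation avoids leaning on that decomposition, at the cost of being less conceptual. The ascent half of the two arguments is identical: both pass to $F^{*},D^{*}$ through Lemma \ref{RA L08} and use the exact relation $\ker F^{n}=(Im\, F^{*n})^{\perp}$ to convert stabilization of $Im\, F^{*n}$ into stabilization of $\ker F^{n}$, and both converse directions are the same routine choice of $k,k^{\prime},s,t$.
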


\begin{proof}
	We observe first that if $ k \geq p ,$ then $Im D^{k}F^{k} \subseteq Im D^{p}F^{k} \subseteq Im D^{p}F^{p} .$ Since $Im D^{k}F^{k} = Im D^{p}F^{p} ,$ we get $Im D^{p}F^{k} = Im D^{p}F^{p} .$ Similarly, we have $Im D^{p} F^{k+s} = Im D^{p} F^{p} ,$ hence $ Im D^{p} F^{k}=Im D^{p} F^{k+s}  .$ Since $Im  F^{k+s} $ is closed by assumption, by \cite[Theorem 2.3.3]{MT} $Im F^{k+s}$ is orthogonally complementable in $M.$ Hence $ D_{\mid_{Im F^{k+s}}}^{p}$ is adjointable.  By applying now \cite[Theorem 2.3.3]{MT} on the operator $D_{\mid_{Im F^{k+s}}}^{p} ,$ we deduce that $Im F^{k+s} \cap \ker D^{p} (= \ker D_{\mid_{Im F^{k+s}}}^{p}) $ is orthogonally complementable in $Im F^{k+s}.$ Thus $Im F^{k+s}=( \ker D^{p} \cap Im F^{k+s} ) \oplus Y$ for some closed submodule $Y.$ Now, since $Im F^{k+s}$ is orthogonally complementable in $M,$ by \cite[Lemma 2.6]{IS3} we deduce that $Im F^{k+s} \oplus Z = Im F^{k} $ for some closed submodule $Z.$ Hence we get that $ Im F^{k}=(\ker D^{p} \cap Im F^{k+s}) \oplus Y \oplus Z.$ Moreover, $Im D^{p} F^{k+s}=D^{p} (Y)= Im D^{p} F^{k}=D^{p}(Y) \tilde{\oplus}D^{p}(Z) .$ It follows that given $z \in Z ,$ there exists some $y \in Y $ such that $D^{p}(y)=D^{p}(z) ,$ hence $y-z \in \ker D^{p} \cap Im F^{k} .$ If $\ker D^{p} \cap Im F^{k}=\ker D^{p} \cap Im F^{k+s} ,$ we get that $y-z \in \ker D^{p} \cap Im F^{k+s} .$ In particular, $y-z \in Im F^{k+s} .$ Since $y \in Im F^{k+s} ,$ we must have that $z \in Im F^{k+s} $ also. As $ Im F^{k+s} \cap Z = \lbrace 0 \rbrace,$ we get that $z=0 .$ Hence $Z=\lbrace 0 \rbrace $ because $z \in Z $ was chosen arbitrary. Thus we obtain that $Im F^{k+s} = Im F^{k} .$ It follows that $Im F^{n}=Im F^{k} $ for all $ n \geq k.$ In particular, $Im F^{n} $ is then closed for all $n \geq k .$ By the proof of \cite[Theorem 2.3.3]{MT}, it follows that $Im F^{*n} $ is also closed for all $ n \geq k .$ Since $k^{\prime} \geq k $ we get that $ Im F^{*k^{\prime}}$ and $Im F^{*k^{\prime}+t} $ are closed. Moreover, from the proof of the  Lemma \ref{RA L08}, we have that $(FD)^{*} $ is Drazin invertible since $FD$ is so, and 
	$Im (FD)^{*n}=\ker (FD)^{n^{\perp}} =\ker (FD)^{p^{\perp}}= Im (FD)^{*p}$ for all $n \geq p .$ Finally, $F^{*}$ an $D^{*}$ mutually commute since $FD=DF.$ Hence, we can apply the preceding arguments on the operators $F^{*}$ and $D^{*}$ instead of the operators $F$ an $D,$ respectively, in order to deduce that $ Im F^{*n} = Im F^{*k^{\prime}}$ for all $n \geq k^{\prime} .$ This gives $\ker F^{n}= (Im F^{*n} )^{\perp}= (Im F^{*k^{\prime}})^{\perp} = \ker F^{k^{\prime}} $ for all $n \geq k^{\prime} .$ Thus, $F$ is Drazin invertible. 
	
	Conversely, if $F$ is Drazin invertible, then there exists some $k$  such that $Im F^{k} $ is closed and such that $Im F^{n}=Im F^{k} $ and $\ker F^{n}=\ker F^{k} $ for all $n \geq k .$ As observed earlier, this implies also that $Im F^{*n}=Im F^{*k} ,$ which is closed, for all $n \geq k .$ 
\end{proof}

\begin{corollary}
	Let $H$ be a Hilbert space and $F,D \in B(H) $ such that $FD=DF.$ If $DF$ is Drazin invertible and $D$ is Fredholm, then $F$ is Drazin invertible if $Im F^{k} $ is closed for all $k \geq p $ where $asc (FD)= dsc(FD)=p .$ 
\end{corollary}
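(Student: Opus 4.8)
The plan is to derive this as a direct application of Proposition~\ref{RA L01}. A Hilbert space $H$ is a Hilbert $C^{*}$-module over $\mathbb{C}$ on which every bounded operator is adjointable, so $B(H)=B^{a}(H)$ and the proposition applies with $M=H$. Since $F$ and $D$ commute we have $DF=FD$, so the assumption that $DF$ is Drazin invertible is exactly the hypothesis of the proposition, with $asc(FD)=dsc(FD)=p$. It therefore suffices to produce indices $s,t,k,k^{\prime}$ with $p\leq k\leq k^{\prime}$ for which $Im F^{k}$ and $Im F^{k+s}$ are closed, $Im F^{k}\cap\ker D^{p}=Im F^{k+s}\cap\ker D^{p}$, and $Im F^{*k^{\prime}}\cap\ker D^{*p}=Im F^{*k^{\prime}+t}\cap\ker D^{*p}$.

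First I would extract the consequences of $D$ being Fredholm. On a Hilbert space a product of Fredholm operators is Fredholm, so $D^{p}$ is Fredholm and $\ker D^{p}$ is finite-dimensional; likewise $D^{*}$ is Fredholm, hence $(D^{p})^{*}=D^{*p}$ is Fredholm and $\ker D^{*p}$ is finite-dimensional. This finite-dimensionality is the heart of the matter.

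Next I would use the monotonicity $Im F^{m+1}\subseteq Im F^{m}$. The spaces $Im F^{m}\cap\ker D^{p}$ form a decreasing chain of subspaces of the finite-dimensional space $\ker D^{p}$, so their dimensions form a non-increasing sequence of nonnegative integers and must stabilize: there is $k_{0}\geq p$ with $Im F^{k}\cap\ker D^{p}=Im F^{k_{0}}\cap\ker D^{p}$ for all $k\geq k_{0}$. Taking $k=k_{0}$ and any $s\geq 1$ gives the first intersection equality. The same argument applied to the decreasing chain $Im F^{*m}\cap\ker D^{*p}$ inside the finite-dimensional $\ker D^{*p}$ (note $Im F^{*(m+1)}=Im(F^{*})^{m+1}\subseteq Im(F^{*})^{m}=Im F^{*m}$) produces $k_{0}^{\prime}\geq p$ with the analogous equality for all $k^{\prime}\geq k_{0}^{\prime}$ and any $t\geq 1$.

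Finally I would set $k=k_{0}$ and $k^{\prime}=\max(k_{0},k_{0}^{\prime})$, so that $p\leq k\leq k^{\prime}$, both stabilization equalities hold, and $Im F^{k}$, $Im F^{k+s}$ are closed by hypothesis (as $k,k+s\geq p$). Proposition~\ref{RA L01} then yields that $F$ is Drazin invertible. I expect the only delicate point to be the bookkeeping ensuring a single choice of indices meets $p\leq k\leq k^{\prime}$ together with both chain equalities; this is exactly where the Fredholmness of $D$ is indispensable, since it forces $\ker D^{p}$ and $\ker D^{*p}$ to be finite-dimensional, and only there do the otherwise-unavailable stabilizations occur.
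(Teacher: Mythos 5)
Your proposal is correct and follows essentially the same route as the paper: both arguments reduce to Proposition \ref{RA L01} by observing that Fredholmness of $D$ forces $\ker D^{p}$ and $\ker D^{*p}$ to be finite dimensional, so the decreasing chains $Im\, F^{k}\cap\ker D^{p}$ and $Im\, F^{*k}\cap\ker D^{*p}$ must stabilize, yielding the required indices $k\leq k^{\prime}$. You merely spell out the dimension-count behind the stabilization and the bookkeeping of indices, which the paper leaves implicit.
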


\begin{proof}
	Since $D$ is Fredholm, we have that $\ker D^{p} $ and $\ker D^{*p} $ are finite dimensional. Hence there exist some $k_{0}^{\prime} \geq k_{0} \geq p $ such that 
	$Im F^{k} \cap \ker D^{p} = Im F^{k_{0}} \cap \ker D^{p}  $ for all $k \geq k_{0} $ and $Im F^{*k} \cap \ker D^{*p}=Im F^{*k_{0}^{\prime}} \cap \ker D^{*p} $ for all $k \geq k_{0}^{\prime} .$
\end{proof}

\begin{remark}
	Actually, it suffices to assume that there exists a strictly increasing sequence $ \lbrace n_{k} \rbrace_{k} \subseteq \mathbb{N} $ such that $Im F^{n_{k}} $ is closed for all $k.$
\end{remark}

Next we give examples of two mutually commuting $C^{*}$-operators whose composition is Drazin invertible whereas they are not Drazin invertible.

\begin{example}
	Let $H$ be a separable infinite dimensional Hilbert space and $S$ be a unilateral shift operator on $H.$ Then $ \ker S^{*} \subsetneqq \ker S^{*^{2}} \subsetneqq \ker S^{*^{3}} \subsetneqq \dots .$
	In addition, $Im S^{k} $ and $Im S^{*^{k}} $ are closed for all $k.$ 
	
	Consider $ H_{\mathcal{A}} $ where $\mathcal{A}=B(H) .$ Let $L_{S}$ and $L_{S^{*}}$  be the left multipliers by $S$ and $S^{*} ,$ respectively, and put $F$ to be the operator on $H_{\mathcal{A}} $ with matrix 
	$\left\lbrack
	\begin{array}{ll}
	F_{1} & 0 \\
	0 & L_{S} \\
	\end{array}
	\right \rbrack
	$
	with respect to the decomposition $L_{1}(\mathcal{A})^{\perp} \oplus L_{1}(\mathcal{A}) ,$ where $ F_{1}$ is an isomorphism. If $P $ stands for the orthogonal projection onto $L_{1}(\mathcal{A})^{\perp} ,$ then $FP$ is obviously Drazin invertible. However, $Im F \supsetneqq Im F^{2} \supsetneqq Im F^{3}  \supsetneqq \dots .$ Similarly, since $F^{*}$ has the matrix 
	$\left\lbrack
	\begin{array}{ll}
	F_{1}^{*} & 0 \\
	0 & L_{S^{*}} \\
	\end{array}
	\right \rbrack
	$ 
	with respect to the same decomposition, (and $F_{1}^{*} $ is an isomorphism as $F_{1} $ is so), it follows easily that $F^{*}P $ is also Drazin invertible, however, $\ker F^{*} \subsetneqq \ker F^{*^{2}} \subsetneqq \ker F^{*^{3}} \subsetneqq \dots  .$ On the other hand, $FP=PF $ and $F^{*}P=PF^{*},$ however, we also have that $(Im F \cap \ker P) \supsetneqq (Im F^{2} \cap \ker P) \supsetneqq (Im F^{3} \cap \ker P) \supsetneqq \dots $ as $Im F^{k} \cap \ker P = \lbrace (G,0,0, \dots) \mid G \in Im L_{S^{k}} $ for all $k \in \mathbb{N} \rbrace .$ Next, let $T \in B(H)=\mathcal{A} $ such that $Im T^{k} $ is not closed for all $k \in \mathbb{N} .$ Then, if $F$ has the matrix 
	$\left\lbrack
	\begin{array}{ll}
	F_{1} & 0 \\
	0 & L_{T} \\
	\end{array}
	\right \rbrack
	$ 
	with respect to the decomposition $L_{1}(\mathcal{A})^{\perp} \oplus L_{1}(\mathcal{A}) ,$ it is easily seen that $ FP=PF$ and $FP$ is Drazin invertible if $F_{1} $ is an isomorphism, however, $Im F^{k} $ is not closed for all $k,$ hence $F$ is not Drazin invertible.
\end{example}

\begin{example}
	Let $H, S$ and $T$ be as in the previous example and $H_{1} $ be another Hilbert space. Consider the Hilbert space $H_{2}:=H_{1} \oplus H.$ If $F$ has the matrix 
	$\left\lbrack
	\begin{array}{ll}
	F_{1} & 0 \\
	0 & S \\
	\end{array}
	\right \rbrack
	$ 
	with respect to the decomposition $H_{1} \oplus H $ and $P$ denotes now orthogonal projection onto $H_{1} ,$ then $FP=PF ,$ $FP$ is Drazin invertible if $F_{1} $ is an isomorphism, however, $ Im F \supsetneqq Im F^{2} \supsetneqq Im F^{3} \supsetneqq \dots .$ Also, $F^{*}P $ is Drazin invertible and $F^{*}P=PF^{*} ,$ however, $\ker F^{*} \subsetneqq \ker F^{*^{2}} \subsetneqq \ker F^{*^{3}} \subsetneqq \dots   .$ Finally, if $D$ has the matrix 
	$\left\lbrack
	\begin{array}{ll}
	D_{1} & 0 \\
	0 & T \\
	\end{array}
	\right \rbrack
	$  
	with respect to the same decomposition, then $ DP=PD$ and $DP$ is Drazin invertible if $D_{1} $ is an isomorphism, however, $Im D^{k}$ is \underline{not} closed for all $k.$ 
\end{example}

\begin{definition} \label{R D09}
	Let $F \in B^{a} (H_{\mathcal{A}}) .$ We say that $F$ is $\mathcal{A} -$Browder if there exists an $\mathcal{A}-$Fredholm decomposition for $F$ of the form $H_{\mathcal{A}} = M \tilde{\oplus} N \stackrel{F}{\longrightarrow} M \tilde{\oplus} N = H_{\mathcal{A}} .$
\end{definition}

\begin{lemma} \label{r11 l0.5.10}
	Let $F,D \in B^{a} (H_{\mathcal{A}})$ such that $FD=DF.$ If $DF$ is Drazin invertible and $\mathcal{A}-$Fredholm, then $F$ and $D$ are $\mathcal{A}-$Browder. 
\end{lemma}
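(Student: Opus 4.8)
The plan is to produce a single topological decomposition $H_{\mathcal{A}} = M_{0} \tilde{\oplus} N_{0}$ that simultaneously puts $F$ and $D$ into $\mathcal{A}$-Browder form. Since $DF$ is Drazin invertible, fix $p$ with $Im(DF)^{k} = Im(DF)^{p}$, $\ker(DF)^{k} = \ker(DF)^{p}$ for all $k \geq p$, and $Im(DF)^{k}$ closed for all $k \geq p$; set $M_{0} = Im(DF)^{p}$ and $N_{0} = \ker(DF)^{p}$, so that $H_{\mathcal{A}} = M_{0} \tilde{\oplus} N_{0}$, the block $(DF)_{\mid M_{0}}$ is an isomorphism, and $(DF)_{\mid N_{0}}$ is nilpotent. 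First I would argue that $N_{0}$ is finitely generated: since $DF$ is $\mathcal{A}$-Fredholm, so is its power $(DF)^{p}$ (composition of $\mathcal{A}$-Fredholm operators is $\mathcal{A}$-Fredholm), and because $Im(DF)^{p}$ is closed by the choice of $p$, \cite[Lemma 12]{IS5} applies to give that $\ker(DF)^{p} = N_{0}$ is finitely generated.

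Next I would use the commutativity $FD = DF$ to show that the decomposition reduces each of $F$ and $D$. As $F$ and $D$ commute with $(DF)^{p}$, for $x \in N_{0}$ we have $(DF)^{p} Fx = F(DF)^{p}x = 0$, so $F(N_{0}) \subseteq N_{0}$, and likewise $D(N_{0}) \subseteq N_{0}$; similarly $F(M_{0}) = F(DF)^{p}(H_{\mathcal{A}}) = (DF)^{p} F(H_{\mathcal{A}}) \subseteq M_{0}$ and $D(M_{0}) \subseteq M_{0}$. Hence, with respect to $H_{\mathcal{A}} = M_{0} \tilde{\oplus} N_{0}$, both operators are block diagonal, $F = \mathrm{diag}(F_{\mid M_{0}}, F_{\mid N_{0}})$ and $D = \mathrm{diag}(D_{\mid M_{0}}, D_{\mid N_{0}})$.

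It then remains to check that the $M_{0}$-blocks are isomorphisms. On $M_{0}$ we have the factorization $(DF)_{\mid M_{0}} = (D_{\mid M_{0}})(F_{\mid M_{0}})$ of an invertible operator into two commuting factors. A short computation shows that $F_{\mid M_{0}}$ has two-sided inverse $(D_{\mid M_{0}})(DF)_{\mid M_{0}}^{-1}$, using that $(DF)_{\mid M_{0}}$ commutes with $F_{\mid M_{0}}$, and symmetrically $D_{\mid M_{0}}$ is invertible. Consequently $H_{\mathcal{A}} = M_{0} \tilde{\oplus} N_{0} \to M_{0} \tilde{\oplus} N_{0}$ is an $\mathcal{A}$-Fredholm decomposition of the special form required in Definition \ref{R D09} for both $F$ and $D$, the isomorphism block being $F_{\mid M_{0}}$ (respectively $D_{\mid M_{0}}$) and the finitely generated block being $N_{0}$; therefore $F$ and $D$ are $\mathcal{A}$-Browder.

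The main obstacle I anticipate is the finite generation of $N_{0}$: this is exactly where the $\mathcal{A}$-Fredholm hypothesis on $DF$ is essential, since Drazin invertibility alone furnishes the reducing decomposition but not the finiteness of $\ker(DF)^{p}$, and one must be careful to invoke that a power of an $\mathcal{A}$-Fredholm operator is again $\mathcal{A}$-Fredholm before appealing to \cite[Lemma 12]{IS5}. By contrast, the invertibility of the $M_{0}$-blocks is the purely algebraic fact that a commuting product can be invertible only if each factor is, and the reduction of $F$ and $D$ by $M_{0}$ and $N_{0}$ is immediate from commutativity.
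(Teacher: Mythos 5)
Your proposal is correct and follows essentially the same route as the paper: the same decomposition $H_{\mathcal{A}} = Im(DF)^{p}\,\tilde{\oplus}\,\ker(DF)^{p}$, finite generation of $\ker(DF)^{p}$ via the Fredholmness of $(DF)^{p}$ and \cite[Lemma 12]{IS5}, and invertibility of the restrictions of $F$ and $D$ to $Im(DF)^{p}$. Your argument for that last point (a commuting factorization of an invertible operator forces each factor to be invertible) is a slightly cleaner packaging of the paper's image/kernel chain computations, but it is the same proof in substance.
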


\begin{proof}
	If $DF$ is Drazin invertible, then there exists some $p \in \mathbb{N}$ such that $DF$ has the matrix 
	$\left\lbrack
	\begin{array}{ll}
	(DF)_{1} & 0 \\
	0 & (DF)_{4} \\
	\end{array}
	\right \rbrack
	$  
	with respect to the decomposition 
	$$H_{\mathcal{A}} = Im (DF)^{p} \tilde{\oplus} \ker (DF)^{p} \stackrel{DF}{\longrightarrow} Im (DF)^{p} \tilde{\oplus} \ker (DF)^{p} = H_{\mathcal{A}} $$
	
	where $(DF)_{1}$ is an isomorphism. Since $DF$ maps $Im (DF)^{p}$ isomorphically  onto itself, it is not hard to see that $F_{\mid_{Im (DF)^{p}}} $ is an isomorphism onto $Im F(DF)^{p}$ and $D_{\mid_{Im F(DF)^{p}}}$ is an isomorphism onto $Im (DF)^{p} .$ However, as we have observed earlier, $Im F^{p+1}D^{p+1} = Im F^{p+1}D^{p} = Im F^{p}D^{p+1} ,$ hence, $F$ and $D$ map  $Im (FD)^{p} $ isomorphically onto itself. Next, since 
	$$\ker F^{p}D^{p} \subseteq \ker F^{p+1}D^{p} \subseteq \ker DF^{p+1}D^{p} = \ker D^{p+1}F^{p+1}  ,$$ 
	we must have that 
	$$\ker F^{p}D^{p}= \ker F^{p+1}D^{p} = \ker F^{p+1}D^{p+1} .$$ 
	Thus, $F$ and $D$ have the matrices 
	$\left\lbrack
	\begin{array}{ll}
	F_{1} & 0 \\
	0 & F_{4} \\
	\end{array}
	\right \rbrack
	$ 
	and 
	$\left\lbrack
	\begin{array}{ll}
	D_{1} & 0 \\
	0 & D_{4} \\
	\end{array}
	\right \rbrack
	,$ 
	respectively, with respect to the decomposition $H_{\mathcal{A}}= Im (FD)^{p} \tilde{\oplus} \ker (FD)^{p} ,$ where $ F_{1} $ and $ D_{1} $ are isomorphisms.\\
	Finally, by \cite[Lemma 2.7.11]{MT}, $ (FD)^{p}$ is $\mathcal{A}-$Fredholm since $FD$ is $\mathcal{A}-$Fredholm. Since $ Im (FD)^{p}$ is closed, from \cite[Lemma 12]{IS5} it follows that $\ker (FD)^{p} $ is finitely generated.	
\end{proof}

\begin{remark}    
	Recall that if $H$ is a Hilbert space and $F \in B(H),$ then $F$ is a Browder operator on $H$ if $F$ is Fredholm and Drazin invertible. Since finitely generated Hilbert subspaces are simply finite dimensional subspaces, it is not hard to see that, in the case of Hilbert spaces, Definition \ref{R D09} correspond to the definition of classical Browder operators.
\end{remark}

\begin{corollary} \cite[Theorem 2.8.2]{ZZRD}
	Let $H$ be a Hilbert space and $F,D \in B(H) $such that $FD=DF$. If $DF$ is Browder, then $F$ and $D$ are Browder.
\end{corollary}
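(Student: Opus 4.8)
The plan is to obtain this corollary as a direct specialization of Lemma \ref{r11 l0.5.10} to the case $\mathcal{A} = \mathbb{C}$. First I would record the routine identifications that make this work. Since $\mathbb{C}$ is a unital $C^{*}$-algebra, the standard Hilbert $\mathbb{C}$-module $H_{\mathbb{C}}$ is just the Hilbert space $H$ itself, and under this identification a finitely generated Hilbert $\mathbb{C}$-submodule is precisely a finite-dimensional subspace. Consequently the notion of an $\mathcal{A}$-Fredholm operator in Definition \ref{D D05} collapses to that of a classical Fredholm operator, and the notion of $\mathcal{A}$-Browder in Definition \ref{R D09} collapses to the classical Browder property. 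This is exactly the observation already made in the Remark following Lemma \ref{r11 l0.5.10}, so I would lean on it rather than re-proving it.

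Next I would unpack the hypothesis. By definition a Browder operator on $H$ is one that is simultaneously Fredholm and Drazin invertible. Viewing $H$ as $H_{\mathbb{C}}$, the assumption that $DF$ is Browder therefore says precisely that $DF$ is $\mathcal{A}$-Fredholm and Drazin invertible, which are exactly the two standing hypotheses of Lemma \ref{r11 l0.5.10}. Since $FD = DF$ is assumed, the commutation requirement of that lemma is also satisfied.

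I would then simply invoke Lemma \ref{r11 l0.5.10} to conclude that $F$ and $D$ are $\mathcal{A}$-Browder, and translate this conclusion back through the dictionary of the first paragraph: being $\mathcal{A}$-Browder on $H_{\mathbb{C}}$ is the same as being Browder on the Hilbert space $H$. This yields that both $F$ and $D$ are Browder, completing the argument.

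No genuine obstacle is expected here, since all of the analytic content has already been carried out in the proof of Lemma \ref{r11 l0.5.10}. The only thing requiring care is the verification that the Hilbert-module vocabulary (finitely generated, $\mathcal{A}$-Fredholm, $\mathcal{A}$-Browder) coincides with the classical Hilbert-space vocabulary (finite-dimensional, Fredholm, Browder) in the scalar case $\mathcal{A} = \mathbb{C}$, which is immediate once one notes that finitely generated Hilbert $\mathbb{C}$-modules are exactly the finite-dimensional Hilbert spaces.
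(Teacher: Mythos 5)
Your proposal is correct and follows essentially the same route as the paper: the paper's proof also just unpacks "Browder" as "Fredholm and Drazin invertible" and invokes Lemma \ref{r11 l0.5.10}, relying on the preceding Remark for the identification of the $\mathcal{A}$-Browder notion with the classical one when $\mathcal{A}=\mathbb{C}$. Your extra care in spelling out that finitely generated Hilbert $\mathbb{C}$-modules are exactly finite-dimensional subspaces is a harmless elaboration of what the paper leaves implicit.
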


\begin{proof}
	If $FD=DF$ is Browder, then it is Fredholm and Drazin invertible, hence the statement follows from Lemma \ref{r11 l0.5.10}. 
\end{proof}


\begin{thebibliography}{}


\bibitem{BS}[BS] M. Berkani and M. Sarih, \textit{On semi B-Fredholm operators}, Glasgow Mathematical Journal. Cambridge Univ. Press, Cambridge. ISSN 0017-0895, Volume 43, Issue 3. May 2001 , pp. 457-465, DOI: https://doi.org/10.1017/S0017089501030075

\bibitem{BM}[BM] M. Berkani, \textit{Index of B-Fredholm operators and Generalizations of a Weyl Theorem}, Proceedings of the American Mathematical Society. Amer. Math. Soc., Providence, RI. ISSN 0002-9939 , Volume 130, Number \textbf{6}, Pages 1717-1723, S 0002-9939(01)06291-8, Article electronically published on October 17, 2001

\bibitem{Bld} [Bld] R. Bouldin, \textit{The product of operators with closed range}, The Tohoku Mathematical Journal. Second Series. Tohoku Univ., Sendai. ISSN 0040-8735. , \textbf{25} (1973), 359-363.

\bibitem{DDj2}[DDj2] D. S. \DJ{}or\dj{}evi\'{c}, \textit{On generalized Weyl operators}, Proceedings of the American Mathematical Society. Amer. Math. Soc., Providence, RI. ISSN 0002-9939, Volume 130, Number \textbf{1}, Pages 81 ll4, s 002-9939(01)0608r-6, April 26,2001


\bibitem{H} [H] R.E. Harte, \textit{The ghost of an index theorem}, Proceedings of the American Mathematical Society. Amer. Math. Soc., Providence, RI. ISSN 0002-9939, \textbf{106} (1989). 1031-1033. MR 92j:47029

\bibitem{IS1}[IS1] S. Ivkovi\'{c} , \textit{Semi-Fredholm theory on Hilbert C*-modules,} Banach Journal of Mathematical Analysis. Tusi Math. Res. Group (TMRG), Mashhad. ISSN 1735-8787 , Vol. \textbf{13} (2019) no. 4 2019, 989-1016 doi:10.1215/17358787-2019-0022. https://projecteuclid.org/euclid.bjma/1570608171

\bibitem{IS3} [IS3] S. Ivkovi\'{c}, \textit{On operators with closed range and semi-Fredholm operators over W*-algebras}, Russian Journal of Mathematical Physics. MAIK Nauka/Interperiod. Publ., Moscow. ISSN 1061-9208, 27, 4860 (2020)  http://link.springer.com/article/10.1134/S1061920820010057

\bibitem{IS5}[IS5] S. Ivkovi\'{c}, \textit{On various generalizations of semi-A-Fredholm operators}, Complex Analysis and Operator Theory. Birkh\"auser, Basel. ISSN 1661-8254. 14, 41 (2020). https://doi.org/10.1007/s11785-020-00995-3 

\bibitem{IS9} [IS9] S. Ivkovi\'{c}, \textit{Semi-Fredholm operators on Hilbert C*-modules}, Doctoral dissertation, Faculty of Mathematics, University of Belgrade  (2022), 

\bibitem{IS11} [IS11] S. Ivkovi\'{c}, \textit{On Drazin invertible C*-operators and generalized C*-Weyl operators}, Ann. Funct. Anal. 14, 36 (2023). https://doi.org/10.1007/s43034-023-00258-0

\bibitem{JS} [JS] P. S. Johnson,  \textit{  Multiplication Operators with Closed Range in Operator Algebras}, Journal of Analysis and Number Theory, ISSN 2375-2785  (print), ISSN 2375-2807 (online) , No. \textbf{1}, 1-5 (2013) 

\bibitem{KM}[KM] M. Karubi, \textit{K-theory. An introduction}, Grundlehren der Mathematischen Wissenschaften. [Fundamental Principles of Mathematical Sciences] Springer, Heidelberg. ISSN 0072-7830, vol 226, Springer-Verlag, Berlin - Heidelberg - New York, 1978


\bibitem{KY} [KY] K. W. Yang, \textit{The generalized Fredholm operators},  Transactions of the American Mathematical Society, Providence, RI. ISSN 0002-9947,
Vol. \textbf{216} (Feb., 1976), pp. 313-326

\bibitem{MT}[MT] V. M. Manuilov, E. V. Troitsky, \textit{Hilbert C*-modules}, In: Translations of Mathematical Monographs. 226, Journal of the American Mathematical Society. Amer. Math. Soc., Providence, RI. ISSN 0894-0347, Providence, RI, 2005.

\bibitem{MF}[MF] A. S. Mishchenko, A.T. Fomenko, \textit{The index of eliptic operators over C*-algebras}, Izvestiya Akademii Nauk SSSR. Seriya Matematicheskaya \textbf{43} (1979), 831--859; English transl., Math. USSR-Izv.\textbf{15} (1980) 87--112.

\bibitem{N} [N] G. Nikaido   , \textit{Remarks on the Lower Bound of a linear Operator},  Japan Academy. Proceedings. Series A. Mathematical Sciences. Japan Acad., Tokyo. ISSN 0386-2194, Volume \textbf{56}, Number 7 (1980), 321-323.

\bibitem{P}[P] W. L. Paschke, \textit{Inner product modules over $B^{*}$-algebras}, Transactions of the American Mathematical Society. Amer. Math. Soc., Providence, RI. ISSN 0002-9947, \textbf{182} (1973), 443--468.

\bibitem{RW}[RW] W. Rudin , \textit{Functional Analysis - 2nd ed} , International series in pure and applied mathematics, ISBN 0-07-054236-8

\bibitem{W}[W] N. E. Wegge Olsen, \textit{ K-theory and C*-algebras}, Oxford Science Publications, The Clarendon Press Oxford University Press, ISSN 0017 3835, New York,  1993.	

\bibitem{ZZRD}[ZZRD] S. \v{Z}ivkovi\'{c} Zlatanovi\'{c}, V. Rako\v{c}evi\'{c}, D.S. \DJ{}or\dj{}evi\'{c}, \textit{Fredholm theory}, University of Ni\v{s} Faculty of Sciences and Mathematics, Ni\v{s},to appear (2022).



\end{thebibliography}
\end{document}